\newtheorem{thm}{Theorem}[section]
\newtheorem*{thmA}{Theorem \ref{thm:main}}
\newtheorem*{thmB}{Theorem \ref{thm:main2}}
\newtheorem{cor}[thm]{Corollary}
\newtheorem{lem}[thm]{Lemma}
\newtheorem{prop}[thm]{Proposition}
\theoremstyle{definition}
\newtheorem{defn}{Definition}[section]
\newtheorem{exam}[thm]{Example}
\theoremstyle{remark}
\newtheorem{rem}[thm]{Remark}
\newtheorem*{claim}{Claim}
\numberwithin{equation}{section}
\newcommand{\setN}{\mathbb{N}}
\newcommand{\setZ}{\mathbb{Z}}
\newcommand{\Zplus}{\mathbb{Z^+}}
\newcommand{\A}{\mathcal{A}}    
\newcommand{\B}{\mathcal{B}}    
\newcommand{\F}{\mathcal{F}}    
\newcommand{\X}{\mathsf{X}}
\newcommand{\ifff}{if and only if}
\newcommand{\cto}{constant-to-one}
\newcommand{\fto}{finite-to-one}
\newcommand{\ito}{infinite-to-one}
\newcommand{\bir}{bi-resolving}
\newcommand{\rc}{right closing}
\newcommand{\lc}{left closing}
\newcommand{\bic}{bi-closing}
\newcommand{\rct}{right continuing}
\newcommand{\lct}{left continuing}
\newcommand{\bict}{bi-continuing}
\newcommand{\SFT}{shift of finite type}
\newcommand{\SFTs}{shifts of finite type}
\newcommand{\rSFT}{irreducible shift of finite type}
\newcommand{\rSFTs}{irreducible shifts of finite type}
\newcommand{\mSFT}{mixing shift of finite type}
\newcommand{\Sofic}{sofic shift}
\newcommand{\rSofic}{irreducible sofic shift}
\newcommand{\tFAE}{the following are equivalent}
\newcommand{\set}[1]{\left\{#1\right\}}
\newcommand{\per}{\mathrm{per}}
\newcommand{\To}{\Rightarrow}
\newcommand{\pXtoY}{\phi : X \to Y}
\newcommand{\preimageAbs}[2] {|{#1}^{-1}(#2)|}
\newcommand{\onto}{\xymatrix{\ar@{>>}[r]&}}
\newcommand{\da}[4]{\xymatrix{#1 \ar@<.5ex>[r]^{#2} \ar@<-.5ex>[r]_{#3} & #4}}
\newcommand{\HL}{\mathcal{HL}}
\newcommand{\LH}{\mathcal{LH}}
\newcommand{\hsyn}{h_{\mathrm{syn}}}
\newcommand{\modp}{~(\mathrm{mod}~ p)}
\newcommand{\as}{almost specified}
\newcommand{\ass}{almost specified shift}
\newcommand{\aspe}{almost specification property}
\newcommand{\spe}{specification property}
\begin{document}

\title[Open and bi-continuing codes]{On the existence of open and bi-continuing codes}
\author{Uijin Jung}

\address{Department of Mathematical Sciences \\
	Korea Advanced Institute of Science and Technology \\
	Daejeon 305-701 \\
	South Korea}
\email{uijin@kaist.ac.kr}

\subjclass[2000]{Primary 37B10; Secondary 37B40, 54H20}
\keywords{open, continuing, sofic shift, shift of finite type, almost specified, specification property}

\begin{abstract}
    Given an \rSofic\ $X$, we show that an \rSFT\ $Y$ of lower entropy is a factor of $X$ \ifff\ it is a factor of $X$ by an open \bict\ code. If these equivalent conditions hold and $Y$ is mixing, then any code from a proper subshift of $X$ to $Y$ can be extended to an open \bict\ code on $X$. These results are still valid when $X$ is assumed to be only an \ass, i.e., a subshift satisfying an irreducible version of the specification property.
\end{abstract}
\maketitle

\section{Introduction and Preliminaries}

Let $X$ and $Y$ be shift spaces. Suppose that there exists a factor code from $X$ to $Y$. If we denote by $h(X)$ the topological entropy of $X$, then $h(X) \geq h(Y)$. Also, whenever $x$ is a periodic point of $X$, there exists a periodic point of $Y$ whose period divides the period of $x$. We denote this condition by $P(X) \searrow P(Y)$ and call it the \emph{periodic condition}. These two conditions (on entropies and periodic points) are necessary for $X$ to factor onto $Y$. To find satisfactory sufficient conditions for $X$ to factor onto $Y$ (by a code with certain properties) is one of the important problems in symbolic dynamics.

In \cite{Boy84}, Boyle proved the following theorem: Given irreducible \SFTs\ $X$ and $Y$ with $h(X) > h(Y)$, $Y$ is a factor of $X$ \ifff\ the periodic condition $P(X) \searrow P(Y)$ holds. One can consider two directions of generalizing this result: One is to impose certain properties on the factor code, and the other is to weaken the finite-type constraints on $X$ and $Y$.

The first type of generalization is due to Boyle and Tuncel. In \cite{BoyT}, they showed that the entropy and the periodic conditions also imply that $Y$ is a factor of $X$ by a \rct\ code. A \emph{\rct\ code} is a code which is surjective on each unstable set (see Definition \ref{def:rct}). It is a natural dual version of a \rc\ code and plays a fundamental role in the class of \ito\ codes \cite{BoyT, Boy05}.

The other direction of generalization is considered by Thomsen \cite{Tho}, who investigated the existence of factor codes between \rSofic s. In particular, he gave a necessary and sufficient condition for an \rSFT\ to be a factor of a given synchronized system of greater synchronized entropy (see \S 5).

In this paper, we focus on two kinds of codes: Open codes and \bict\ codes, i.e., left and right continuing codes. We will generalize the above results in both directions. A code is \emph{open} if it sends an open set to an open set. Finite-to-one open codes between \SFTs\ have been studied in several contexts \cite{Hed,Nas,Jung}. A \fto\ factor code between \rSFTs\ is open \ifff\ it is \bic\ \cite{Nas}. A natural generalization exists: A factor code from a \SFT\ to a shift space is open \ifff\ it is \bict\ (see Theorem \ref{thm:open_iff_bict}). In \S \ref{Section_II:Properties}
, we investigate these two types of codes and give several sufficient conditions for one to imply the other. 

To enlarge the class of domains satisfying the theorem of Boyle and Tuncel, we consider the class of \emph{shifts with the \aspe} (or \emph{\ass s}) which are subshifts satisfying an irreducible version of the specification property (see Definition \ref{def:specification+almost_specification}). Every irreducible sofic shift is \as, and \ass s behave much like shifts with the \spe\ except being mixing. Our main theorem is stated as follows.

\begin{thmA}
    Let $X$ be an \ass\ and $Y$ an \rSFT\ such that $h(X) > h(Y)$. Then the following are equivalent.
    \begin{enumerate}
        \item $Y$ is a factor of $X$ by a \bict\ code with a bi-retract.
        \item $Y$ is a factor of $X$ by an open code with a uniform lifting length.
        \item $Y$ is a factor of $X$.
    \end{enumerate}
    If $X$ is of finite type, then the above conditions are also equivalent to:
    \begin{enumerate}
        \item[(4)] $P(X) \searrow P(Y)$.
    \end{enumerate}
\end{thmA}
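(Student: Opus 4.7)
The plan is to separate implications by difficulty. The forward implications $(1) \Rightarrow (3)$ and $(2) \Rightarrow (3)$ are tautological. When $X$ is of finite type, $(3) \Rightarrow (4)$ is immediate from the preservation of the periodic condition under factor maps, and $(4) \Rightarrow (3)$ is Boyle's theorem from \cite{Boy84}. The substantive content is therefore $(3) \Rightarrow (1)$ and $(3) \Rightarrow (2)$, and I would aim to prove both at once by constructing a single modified code $\psi : X \to Y$ that is simultaneously open with a uniform lifting length and \bict\ with a bi-retract.

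The approach is marker-and-replacement. Starting from an arbitrary factor code $\pXtoY$, I would fabricate a sparse, uniformly detectable marker structure on $X$ and, at each marker occurrence, overwrite $\phi$ by a canonical filling. Two ingredients drive the construction: the \aspe, which supplies a mistake function $g$ permitting arbitrary blocks in the language of $X$ to be glued across a bounded gap up to a controlled error, and the entropy gap $h(X) > h(Y)$, which produces via the usual counting argument a \emph{magic} word $u$ in the language of $X$ with enough freedom that every sufficiently long $Y$-block lifts under $\phi$ to an $X$-block passing through $u$. Using irreducibility of the \SFT\ $Y$, a finite family $\cF$ of $Y$-words of a common length $N$ is selected, together with, for each $w \in \cF$, a distinguished $X$-block of length comparable to $N + g$ that routes through $u$ and is mapped by $\phi$ onto $w$. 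The new code $\psi$ agrees with $\phi$ outside the marker windows and substitutes the distinguished $X$-block at each marker.

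Openness with uniform lifting length then follows from a direct cylinder computation once one knows that marker positions in $X$ are determined by bounded windows: every long enough $Y$-cylinder pulls back under $\psi$ to a union of $X$-cylinders of a length depending only on $N$, $g$, and the marker spacing. For bi-continuity, given $y \in Y$, $x \in \psi^{-1}(y)$, and a right extension $y^{+}$ of $y$, one locates the next marker in $y^{+}$, reads off the unique $w \in \cF$ there, uses the distinguished $X$-filling to propagate $x$ forward, and appeals to the \aspe\ to glue the pieces; the left case is symmetric, and the bi-retract $Y \to X$ arises by always choosing the distinguished filling at every marker.

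The main obstacle, in my view, is reconciling the approximate nature of \as\ gluing with the requirement that markers be readable from a bounded one-sided window. In the \SFT\ case one inserts markers surgically and their positions are locally visible, but here the gluing is only up to error $g$, so one must absorb this error into the length of $u$ and into the spacing between markers, while preventing neighboring marker modifications from interacting and ensuring that the marker-detection rule is correct on every point of $X$. Once this marker bookkeeping is in place, the remaining verifications are structural, and both $(1)$ and $(2)$ fall out of the same $\psi$.
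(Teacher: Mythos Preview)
There are two definitional errors and one structural gap.

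First, you are using the wrong notion of almost specification. In this paper (Definition~\ref{def:specification+almost_specification}), $X$ is \as\ if any two blocks can be joined by a gap word of bounded but variable length, \emph{exactly}, with no errors. You instead invoke a ``mistake function $g$'' and gluing ``up to a controlled error'' --- that is the Pfister--Sullivan/Thompson property, a different hypothesis with different consequences. Your concern about absorbing errors into the marker bookkeeping is therefore misplaced; the genuine obstacle is that the gap length is not fixed, which is why the paper first passes (via Lemma~\ref{lem:aspe_then_cover_has_specification} and Proposition~\ref{prop:aspe_then_SFT_exists}) to a setting with the full \spe. Second, a \emph{retract} here is an integer $n$ measuring how far back one must go to lift a one-sided extension (see the definition following Definition~\ref{def:rct}), not a map $Y\to X$. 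Your sentence ``the bi-retract $Y\to X$ arises by always choosing the distinguished filling at every marker'' is aiming at the wrong object.

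Structurally, your marker scheme does not address the case where $Y$ is irreducible but not mixing: you cannot select a family $\cF$ of $Y$-words of a \emph{common} length $N$ connecting arbitrary symbols when $\per(Y)>1$, so the filling step breaks down. The paper handles this by first observing that condition~(3) yields the cyclic-cover compatibility of Theorem~\ref{thm:Thomsen_main}, and then reducing to the mixing case: with $q=\per(Y)$ and $C=\bigcup_{j} D_{jq}$, one shows $\sigma^q|_C$ is \as\ and $\sigma^q|_{E_0}$ is a \mSFT, applies the extension construction of Theorem~\ref{thm:exist_bict_code_1} to obtain a $\sigma^q$-commuting \bict\ code $(C,\sigma^q)\to(E_0,\sigma^q)$ with a bi-retract, and reassembles a $\sigma$-commuting code on $X$. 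Even in the mixing case the paper does not modify an arbitrary factor code by markers; it first builds an \rSFT\ $Z\subset X$ carrying a \bic\ factor code $\pi:Z\to Y$ (Lemma~\ref{lem:exist_bic}), extends $\pi$ via Lemma~\ref{lem:extension}, and defines $\phi$ through a low/high-stretch decomposition in which the \rc\ property of $\pi$ on $Z$-segments is what actually drives the continuation argument.
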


As a corollary, it follows that given an \rSFT\ $X$, a shift space $Y$ is a lower entropy open factor of $X$ exactly when $h(X) > h(Y)$, $P(X) \searrow P(Y)$ and $Y$ is an \rSFT\ (see Corollary \ref{thm:exist_open_code}).

It is not surprising that factor problems usually involve extension problems \cite{Ash93, Boy84,BoyT}. Even in the case where $X$ and $Y$ are \rSFTs\ and $P(X) \searrow P(Y)$, not every code can be extended (see Example \ref{ex:extensionfail}). In \S \ref{sec:main2}, we give a necessary condition for a code on a proper subshift of $X$ to be extended to a code on $X$ and show that it is the only obstruction to extend it to a code on $X$. We give an extension theorem as follows.

\begin{thmB}
    Let $X$ be an \ass\ and $Y$ an \rSFT\ such that $h(X) > h(Y)$ and $Y$ is a factor of $X$. Then any code from a proper subshift of $X$ to $Y$, which can be extended to a code on $X$, can be extended to an open \bict\ code of $X$ onto $Y$.
\end{thmB}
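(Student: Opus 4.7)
Write $X''$ for the proper subshift of $X$, $\phi : X'' \to Y$ for the given code, and $\psi : X \to Y$ for any extension of $\phi$ to $X$, whose existence is the standing hypothesis. By Theorem \ref{thm:main} applied to the pair $(X,Y)$, there is an open \bict\ code $\pi : X \to Y$ with a uniform lifting length and a bi-retract. The goal is to interpolate between $\psi$ (near $X''$) and $\pi$ (elsewhere) to obtain a single sliding block code $\psi' : X \to Y$ extending $\phi$ and inheriting the openness and bi-continuity of $\pi$.

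First, I would produce a marker subshift $M \subset X$ of entropy strictly less than $h(Y)$, disjoint from $X''$, such that every $x \in X \setminus X''$ visits $M$ with bounded gaps. The \aspe\ of $X$, together with the strict inequality $h(X) > h(Y)$ and the properness of $X''$, makes such a marker construction possible, in a manner parallel to the one already used in the proof of Theorem \ref{thm:main}.

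The new code $\psi'$ is then defined block by block. Each $x \in X$ is cut into blocks by the occurrences of $M$ in its orbit; blocks lying entirely in $X''$ receive the $\phi$-image, while the remaining blocks receive the $\pi$-image with endpoints adjusted to splice continuously to the neighboring $\phi$- or $\pi$-images. The splicing is where the uniform lifting length and bi-retract of $\pi$ are crucial: given any prescribed short boundary word in $Y$, a $\pi$-preimage can be modified to realize that boundary while differing from the original only on a window of bounded length. The hypothesis that $\phi$ admits some extension $\psi$ is what ensures that the required boundary word in $Y$ is actually attainable at each marker location, so no obstruction arises at the interface of $X''$ with its complement.

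Finally, I would verify that $\psi'$ is shift-commuting and continuous, agrees with $\phi$ on $X''$, is surjective, and inherits openness and bi-continuity from $\pi$: away from markers it coincides with $\pi$, and near markers the entropy gap $h(M) < h(Y)$ together with the bi-retract of $\pi$ lets us lift any forward or backward continuation of a $Y$-point through the splice. The main obstacle is this last verification across splice windows, namely showing that small perturbations of the target $Y$-trajectory lift uniformly through a marker splice so that $\psi'$ is genuinely open and \bict. The extension hypothesis is essential here: without a global extension to compare against, the prescribed $\phi$-values on $X''$ could be incompatible with any bi-continuing target trajectory near a marker, making the splicing impossible.
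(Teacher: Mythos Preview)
Your marker step cannot be carried out. You ask for a set $M$ disjoint from $X''$ such that every $x \in X \setminus X''$ visits $M$ with \emph{bounded} gaps, but $X \setminus X''$ always contains points left-asymptotic to $X''$: pick $x'' \in X''$ and a word $u \in \B(X) \setminus \B(X'')$; by almost specification (Lemma~\ref{lem:spe_aspe_iff_connectable}) there is a point $x \in X$ with $x_{(-\infty,0]} = x''_{(-\infty,0]}$ and $u$ occurring in $x_{[0,\infty)}$, so $x \notin X''$. The entire left half of such an $x$ consists of $X''$-blocks and therefore contains nothing from any $M$ disjoint from $X''$; the gaps are unbounded. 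More fundamentally, no sliding block code can decide from a finite window whether it is ``inside $X''$'' or merely close to it, so there is no block rule making your $\psi'$ equal $\phi$ exactly on $X''$ while switching to $\pi$ elsewhere. (The requirement $h(M) < h(Y)$ also points the wrong way: to lift arbitrary $Y$-continuations through a splice region one needs \emph{many} choices in $X$ there, i.e.\ entropy above $h(Y)$, not below. And the bi-retract of $\pi$ adjusts \emph{preimages} to match a prescribed image, not images to match a prescribed boundary word, so it does not do the splicing you describe.)

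The paper proceeds quite differently. The hypothesis that $\tilde\phi$ extends to some code on $X$ is used only to extract a \emph{cyclic condition}: $\tilde\phi(\widetilde X \cap D_0)$ lies in a single piece $E_j$ of the cyclic cover of $Y$. Passing to $C = \bigcup_k D_{kq}$ reduces the problem to an almost specified domain $(C,\sigma^q)$ and a mixing SFT target $(E_j,\sigma^q)$, i.e.\ to Theorem~\ref{thm:exist_bict_code_1}. That theorem does not interpolate between two global codes at all. Instead it produces a mixing SFT $Z \subset X$ with $h(Z) > h(Y)$, disjoint from $\widetilde X$ and consisting of synchronizing words, together with a \emph{bi-closing} factor code $\pi : Z \to Y$, and defines the extension by a low/high-stretch decomposition in which $\psi$ is used on long high-stretches (hence on all of $\widetilde X$) and $\pi$ on low-stretches. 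Bi-continuity with a uniform retract then comes from the closing property of $\pi$ on the sub-SFT $Z$ combined with specification, which lets one graft a $Z$-ray onto any left tail $x_{(-\infty,-n]}$.
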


In particular, if $Y$ is mixing, then any code from a proper subshift of $X$ to $Y$ can be extended to an open \bict\ code of $X$ onto $Y$ (see Theorem \ref{thm:exist_bict_code_1}).
The theorem says that the openness and the \bict\ property of a code are both global in the sense that these properties cannot be ruled out by any property a code might exhibit on a proper subshift.

\vspace{0.15cm}
We introduce some notations and definitions used. If $X$ is a shift space with the shift map $\sigma$, denote by $\B_n(X)$ the set of all words of length $n$ appearing in the points of $X$ and $\B(X) = \bigcup_{n \geq 0} \B_n(X)$. Also let $\A(X) = \B_1(X)$ and $\B^X_n(a,b) = \{ u_1 \ldots u_n \in \B_n(X) : u_1 = a, u_n = b\}$ for each $a,b \in \A(X)$.
A shift space $X$ is called \textit{irreducible} if for all $u, v \in \B(X)$, there is a word $w$ with $uwv \in \B(X)$. It is called \textit{mixing} if for all $u, v \in \B(X)$, there is an integer $N \in \setN$ such that whenever $n \geq N$, we can find $w \in \B_n(X)$ with $uwv \in \B(X)$. If there is such an $N$ which works for all $u,v \in \B(X)$, then we call $N+1$ a \emph{transition length} for $X$.

Given $u \in \B(X)$, let ${}_l[u]$ denote the open and closed set $\{x \in X : x_{[l,l+|u|-1]}=u\}$, which we call a \emph{cylinder}. If $|u|=2l+1$, then ${}_{-l}[u]$ is called a \emph{central} $2l+1$ \emph{cylinder}. When $u \in \B_1(X)$ and $l=0$, we usually discard the subscript 0.

A \emph{code} $\pXtoY$ is a continuous $\sigma$-commuting map between shift spaces. It is called a \emph{factor code} if it is onto. In this case we say that $Y$ is a \emph{factor} of $X$. Any code can be recoded to be a 1-block code, i.e. a code for which $x_0$ determines $\phi(x)_0$.

A subshift $X$ is called a \textit{shift of finite type} if there is a finite set $\F$ of words such that $X$ consists of all points on $\A(X)$ in which there is no occurrence of words from $\F$. Any shift of finite type is conjugate to an edge shift, i.e., a shift space which consists of all bi-infinite trips in a directed graph $G$. If $A$ is the adjacency matrix for $G$, denote by $\X_A$ the edge shift on $G$. A shift space is called \emph{sofic} if it is a factor of a \SFT. A mixing \Sofic\ always has a transition length.

A word $v \in \B(X)$ is \emph{synchronizing} if whenever $uv$ and $vw$ are in $\B(X)$, we have $uvw \in \B(X)$. Following \cite{BlaH}, an irreducible shift space $X$ is a \emph{synchronized system} if it has a synchronizing word. For a given set of words $W$, let $\X_W$ be the smallest shift space containing all the sequences obtained by concatenating words in $W$. We call $\X_W$ a \emph{coded system}. A synchronized system is coded \cite{BlaH}.

For a shift space $X$ with periodic points, let $\per(X)$ be the greatest common divisor of periods of all periodic points of $X$. Let $X$ be an irreducible edge shift and $p = \per(X)$. Then there exists a unique partition $\{ A_0, A_1, \cdots, A_{p-1} \}$ of $\A(X)$  such that whenever $ab \in \B(X)$ and $a \in A_i$, then $b \in A_{i+1 \modp}$. Also there is $n \in \setN$ with the property that $\B_n^X(a,b) \neq \emptyset$ for any  $a, b \in A_i$ and $i = 0, 1, \cdots, p-1$. This $n$ is called a \emph{weak transition length} for $X$.

The entropy of a shift space $X$ is defined by $h(X) = \lim_{n \to \infty} (1/n) \log |\B_n(X)|$, which equals the topological entropy of $(X,\sigma)$ as a dynamical system.
If $X$ is an \rSFT\ and $p = \per(X)$, then for any $a, b \in \A(X)$ we have the following equalities:
    \[ \begin{split}
            h(X) & = \lim_{n \to \infty} \frac{1}{np} \log |p_{np}(X)| \\
                 & = \lim_{n \to \infty} \frac{1}{np} \log |q_{np}(X)| = \limsup_{n \to \infty} \frac{1}{n}
                 \log |\B_n^X(a,b)|,
       \end{split}
    \]
where $p_k(X)$ (resp. $q_k(X)$) denotes the number of periodic points of $X$ with period $k$ (resp. least period $k$). If $X$ is mixing, then limsup becomes limit.

For more details on symbolic dynamics, see \cite{LM}. For a perspective for open maps between shift spaces, see \cite{Jung}.

\vspace{0.3cm}
\section{Properties of continuing codes}\label{Section_II:Properties}

A code $\pXtoY$ is called \rc\ if whenever $x \in X$, $y \in Y$ and $\phi(x)$ is left asymptotic to $y$, then there exists at most one $\bar x \in X$ such that $\bar x$ is left asymptotic to $x$ and $\phi(\bar x) = y$. Right closing codes form an important class of \fto\ codes, especially between \rSFTs. The following notion, which can be thought as a dual of the above property, first appeared in \cite{BoyT}.

\begin{defn} \label{def:rct}
    A code $\pXtoY$ between shift spaces is called \emph{\rct} if whenever $x \in X$, $y \in Y$ and $\phi(x)$ is left asymptotic to $y$, then there exists at least one $\bar x \in X$ such that $\bar x$ is left asymptotic to $x$ and $\phi(\bar x) = y$. A \emph{\lct} code is defined similarly. If $\phi$ is both left and right continuing, it is called \emph{\bict}.
\end{defn}

A \rct\ code into an irreducible sofic shift must be onto. In general it need not be onto.
The Perron-Frobenius theory implies that a factor code between \rSFTs\ with the same entropy is \rct\ exactly when it is \rc.

\begin{defn}
    An integer $n \in \Zplus$ is called a (\emph{\rct}) \emph{retract} of a \rct\ code $\pXtoY$ if, whenever $x \in X$ and $y \in Y$ with $\phi(x)_{(-\infty,0]} = y_{(-\infty,0]}$, we can find $\bar x \in X$ such that $\phi(\bar x) = y$ and $x_{(-\infty,-n]} = \bar x_{(-\infty,-n]}$.
\end{defn}

Note that having a retract is a conjugacy invariant. If $\phi$ has both left and right continuing retracts $n$, we simply say that $\phi$ has \emph{bi-retract $n$}.

A code $\pXtoY$ between shift spaces is open \ifff\ for each $k \in \setN$, there is $m \in \setN$ such that whenever $x \in X, y \in Y$ and $\phi(x)_{[-m,m]} = y_{[-m,m]}$, we can find $\bar{x} \in X$ with $\bar{x}_{[-k,k]} = x_{[-k,k]}$ and $\phi(x) = y$.
We say that $\phi$ has a \emph{uniform lifting length} if for each $k \in \setN$, there exists $m$ satisfying the above property such that $\sup_k |m-k| < \infty$. In this case, we can assume that $m-k$ is constant and nonnegative. Note that having a uniform lifting length is also a conjugacy invariant.

\begin{lem} \label{lem:open_ull_then_bict_and_converse}
    Let $\pXtoY$ be a code between shift spaces. If $\phi$ is open with a uniform lifting length, then it is \bict\ with a bi-retract.
    When $Y$ is of finite type, then the converse holds.
\end{lem}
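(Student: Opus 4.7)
The lemma has two directions, and they are of rather different flavors: the forward implication is purely topological (no hypothesis on $Y$), while the converse crucially uses the SFT structure of $Y$ to splice target points. I would prove them separately.

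For the forward direction, assume $\phi$ is open with uniform lifting length, so there is a constant $c \geq 0$ such that $m = k + c$ witnesses the lifting property for each $k$. I will show $\phi$ is right continuing with retract $c$; the left case is symmetric. Take $x \in X$ and $y \in Y$ with $\phi(x)_{(-\infty,0]} = y_{(-\infty,0]}$. For each $k \geq 0$, shift by $-(k+c)$: setting $\tilde{x} = \sigma^{-(k+c)}(x)$ and $\tilde{y} = \sigma^{-(k+c)}(y)$, the asymptotic equality pushes the agreement region of $\phi(\tilde{x})$ and $\tilde{y}$ out to $(-\infty, k+c]$, so in particular over $[-(k+c), k+c]$. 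Applying openness with uniform lifting length produces $\bar{\tilde{x}}_k \in X$ agreeing with $\tilde{x}$ on $[-k,k]$ and mapping onto $\tilde{y}$. Unshifting to $\bar{x}_k = \sigma^{k+c}(\bar{\tilde{x}}_k)$ gives $\phi(\bar{x}_k) = y$ with $\bar{x}_k = x$ on $[-(2k+c), -c]$. Passing to a convergent subsequence by compactness of $X$ and continuity of $\phi$ yields a limit $\bar{x}$ with $\phi(\bar{x}) = y$ and $\bar{x} = x$ on $(-\infty, -c]$, as required.

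For the converse, assume $Y$ is an SFT and $\phi$ is bi-continuing with bi-retract $n$. After recoding, I may take $Y$ to be a $1$-step SFT, since both hypothesis and conclusion are conjugacy invariant (the general $M$-step case is handled by absorbing $M-1$ into the constant). I claim $c = n$ works. Given $k$ and $x,y$ with $\phi(x)_{[-m,m]} = y_{[-m,m]}$ where $m = k + n$, form the spliced point $z \in Y$ defined to equal $y$ on $(-\infty, -k-n]$ and $\phi(x)$ on $[-k-n, \infty)$; this is a valid point of $Y$ because $y$ and $\phi(x)$ agree at $-k-n$ and $Y$ is $1$-step. Now $z$ and $\phi(x)$ agree on the right ray $[-k-n, \infty)$, so left-continuing with retract $n$ (suitably shifted) produces $\bar{x}_1$ with $\phi(\bar{x}_1) = z$ and $\bar{x}_1 = x$ on $[-k, \infty)$. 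On the other hand, $z = y$ holds throughout $(-\infty, m]$: on $(-\infty, -k-n]$ by definition, and on $[-k-n, m]$ because $z = \phi(x) = y$ there. Applying right-continuing with retract $n$ to the pair $(\bar{x}_1, y)$ yields $\bar{x}$ with $\phi(\bar{x}) = y$ and $\bar{x} = \bar{x}_1$ on $(-\infty, m-n] = (-\infty, k]$. Intersecting the two agreement regions gives $\bar{x} = x$ on $[-k, k]$, completing the proof.

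The main obstacle is purely bookkeeping: keeping the shift conventions consistent and verifying that the window of equality of $\phi(x)$ and $y$ is wide enough for the splice point $z$ to propagate the asymptotic equalities needed for both continuing-retract applications. The rest is formal. One subtlety worth flagging is that the $M$-step recoding in the converse does not just change $Y$; since the conjugacy is applied at the target, it alters the block length of $\phi$ and hence the retract constant by at most $M-1$, which is why the final constant in a non-recoded statement would be $c = n + M - 1$ rather than $c = n$.
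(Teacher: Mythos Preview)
Your proof is correct and follows essentially the same approach as the paper's. The forward direction is the same compactness/limit argument (you phrase it with explicit shifts, the paper with cylinders), and the converse is the same two-step splice-then-apply-both-retracts argument; you splice at $-m$ and apply left-continuing first, while the paper splices at $0$ and applies right-continuing first, but these are mirror images of one another.
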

\begin{proof}
    First, suppose that $\phi$ is open with a uniform lifting length. We can assume $\phi$ is a 1-block code. Choose $l \geq 0$ so that for each central $2k+1$ cylinder in $X$, its image consists of central $2k+2l+1$ cylinders. Let $x \in X$ and $y \in Y$ satisfy $\phi(x)_{(-\infty,0]} = y_{(-\infty,0]}$. By considering $\phi({}_{-2k-l}[x_{-2k-l} \cdots x_{-l}])$ for $k \in \setN$, we obtain points $z^{(k)}$ such that $z^{(k)}_{[-2k-l,-l]} = x_{[-2k-l,-l]}$ and $\phi(z^{(k)}) = y$. Let $z$ be a limit point of $\{z^{(k)}\}_{k \in \setN}$. Then $z_{(-\infty,-l]} = x_{(-\infty,-l]}$ and $\phi(z) = y$, as desired. Thus $\phi$ is right continuing with retract $l$. Similarly $\phi$ is left continuing with retract $l$.

    Next, assume that $\phi$ is \bict\ with bi-retract $n \in \setN$ and $Y$ is of finite type. We can assume that $Y$ is an edge shift and $\phi$ is a 1-block code.
    Suppose that $l \geq 0$ and $u \in \B_{2l+1}(X)$. Choose $x \in {}_{-l}[u]$ and let $y \in Y$ with $y_{[-l-n,l+n]} = \phi(x)_{[-l-n,l+n]}$. The point $\bar y$, given by ${\bar y}_i = \phi(x)_i$ for $i \leq 0$ and $ {\bar y}_i = y_i$ for $i \geq 0$, is in $Y$. Since $n$ is a right continuing retract and $\phi(x)_{(-\infty,l+n]} = {\bar y}_{(-\infty,l+n]}$, there is $z \in X$ such that ${z}_{(-\infty,l]} = {x}_{(-\infty,l]}$ and $\phi(z) = \bar y$. Then $\phi(z)_{[-l-n,\infty)} = {\bar y}_{[-l-n,\infty)} = y_{[-l-n,\infty)}$. Since $n$ is also a left continuing retract, there is an $\bar x \in X$ such that ${\bar x}_{[-l,\infty)} = {z}_{[-l,\infty)}$ and $\phi(\bar x) = y$. Note that ${\bar x}_{[-l,l]} = {z}_{[-l,l]} = {x}_{[-l,l]}$, hence $\bar x$ is in ${}_{-l}[u]$. So $_{-l-n}[\phi(x)_{[-l-n,l+n]}] \subset \phi(_{-l}[u])$. Thus $\phi$ is open with uniform lifting length $n$.
\end{proof}

\begin{rem} \label{rem:converse_still_hold}
    Note that the proof of the second statement in Lemma \ref{lem:open_ull_then_bict_and_converse} is still valid under a weaker assumption on $Y$, that is, when there is a \SFT\ $Z$ such that $\phi(X) \subset Z \subset Y$ and $Z$ is open and closed in $Y$.
\end{rem}

\begin{lem}\cite{Jung} \label{prop:factor_of_SFT_is_SFT}
    Let $X$ be a \SFT\ and $\phi$ an open factor code from $X$ to a shift space $Y$. Then $Y$ is of finite type.
\end{lem}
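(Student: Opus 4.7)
The plan is to show $Y$ is an SFT by exhibiting an integer $m$ such that any two words $uw, wv \in \B(Y)$ with $|w| \geq 2m+1$ admit the common extension $uwv \in \B(Y)$. After a standard higher-block recoding I may assume $X$ is a $1$-step SFT (an edge shift) and $\phi$ is a $1$-block code.

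The first step is to produce the uniform $m$ from openness. For each $a \in \A(X)$ the cylinder $[a]$ is clopen, so $\phi([a])$ is both open (by hypothesis) and closed (as the continuous image of a compact set). Such a compact open subset of $Y$ is a finite union of central cylinders, giving some $m_a$ with $\phi([a]) = \bigcup_{w \in W_a} {}_{-m_a}[w]$ for a set $W_a \subseteq \B_{2m_a+1}(Y)$. Finiteness of $\A(X)$ then yields a uniform $m := \max_a m_a$, and each $\phi([a])$ is a union of central $(2m+1)$-cylinders. The upshot is: whenever $y^{(1)}, y^{(2)} \in Y$ agree on $[-m,m]$ and $x^{(1)} \in \phi^{-1}(y^{(1)})$, there is $x^{(2)} \in \phi^{-1}(y^{(2)})$ with $x^{(2)}_0 = x^{(1)}_0$.

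The second step uses the Markov property of $X$ to glue preimages. Given such $y^{(1)}, y^{(2)}, x^{(1)}, x^{(2)}$, define $x^{(3)}$ to agree with $x^{(1)}$ on $(-\infty,0]$ and with $x^{(2)}$ on $[0,\infty)$. Since $x^{(1)}_0 = x^{(2)}_0$, every two-block of $x^{(3)}$ lies in $\B_2(X)$, so the 1-step condition gives $x^{(3)} \in X$. Then $y^{(3)} := \phi(x^{(3)})$ agrees with $y^{(1)}$ on $(-\infty,m]$ and with $y^{(2)}$ on $[-m,\infty)$, because $y^{(1)}$ and $y^{(2)}$ already coincide on $[-m,m]$.

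From this bridging one reads off the SFT property: given $u, v \in \B(Y)$ and $w$ of length $2m+1$ with $uw, wv \in \B(Y)$, realize them as factors of points $y^{(1)}, y^{(2)} \in Y$ aligned so that $w$ occupies coordinates $[-m,m]$ in both, and extract $uwv$ from $y^{(3)}$. Longer bridges reduce to this by choosing any central $(2m+1)$-subword of $w$ and absorbing the remainder into $u$ and $v$, so $Y$ is $2m$-step. I expect the only delicate point to be the uniformity of $m$ over $\A(X)$ in the first step; everything after is a routine glueing argument enabled by the 1-step nature of $X$.
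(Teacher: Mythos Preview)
The paper does not actually prove this lemma here; it is simply cited from \cite{Jung}. So there is no ``paper's own proof'' to compare against in this text. That said, your argument is correct and is the standard one: the uniform $m$ you extract from openness of the finitely many cylinders $\phi([a])$ is exactly the device the author uses later in the proof of Proposition~\ref{prop:rct_from_SFT_has_retract_and_open_from_SFT_has_ull}(2), and the splicing step is precisely where the $1$-step hypothesis on $X$ is spent. One minor quibble: what you have shown is that any two points of $Y$ agreeing on a window of length $2m+1$ can be spliced, which makes $Y$ a $(2m+1)$-step SFT rather than $2m$-step; this off-by-one does not affect the conclusion that $Y$ is of finite type.
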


In \cite{BoyT}, the first statement of the following proposition is proved when both shift spaces are of finite type. We show that the codomain need not be of finite type.

\begin{prop} \label{prop:rct_from_SFT_has_retract_and_open_from_SFT_has_ull}
    Let $\phi \!: \! X \to Y$ be a code between shift spaces and $X$ of finite type.
    \begin{enumerate}
        \item If $\phi$ is \rct, then it has a retract.
        \item If $\phi$ is open, then it has a uniform lifting length.
    \end{enumerate}
\end{prop}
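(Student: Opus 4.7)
The plan is to prove (1) by a compactness/upper semi-continuity argument, and to deduce (2) by combining (1) with Lemma~\ref{lem:open_ull_then_bict_and_converse} and Remark~\ref{rem:converse_still_hold}.

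For (1), consider the compact set
\[
D = \{(x,y) \in X \times Y : \phi(x)_{(-\infty,0]} = y_{(-\infty,0]}\}
\]
and define $f : D \to \mathbb{Z}_{\geq 0}$ by letting $f(x,y)$ be the smallest $n \geq 0$ for which some $\bar x \in X$ satisfies $\phi(\bar x) = y$ and $\bar x_{(-\infty,-n]} = x_{(-\infty,-n]}$; the right-continuing hypothesis makes $f$ finite on $D$. I would argue that each level set $\{f \leq n\}$ is closed: given $(x_j, y_j) \to (x, y)$ in $D$ with witnessing lifts $\bar x_j$, a convergent subsequence $\bar x_j \to \bar x$ satisfies $\phi(\bar x) = y$ by continuity of $\phi$, and $\bar x_{(-\infty,-n]} = x_{(-\infty,-n]}$ by coordinatewise eventual equality. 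Hence $f$ is upper semi-continuous on a compact set, so attains its supremum, which is finite and serves as a uniform retract. This part does not seem to use the SFT hypothesis directly.

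For (2), Lemma~\ref{prop:factor_of_SFT_is_SFT} makes $\phi(X)$ an SFT, and openness of $\phi$ together with compactness of $X$ makes $\phi(X)$ clopen in $Y$. Thus Remark~\ref{rem:converse_still_hold} applies with $Z = \phi(X)$, and it suffices to produce a bi-retract for $\phi$; by part (1), this reduces to showing $\phi$ is bi-continuing. For the right-continuing half (the left being symmetric), given $(x,y)$ with $\phi(x)_{(-\infty,0]} = y_{(-\infty,0]}$, I would construct, for each $k \geq 0$, an auxiliary point $x^{(k)} \in X$ agreeing with $x$ on $[-k, 0]$ whose right half has $\phi$-image matching $y$ on a sufficiently long central window, then invoke openness at $_{-k}[x^{(k)}_{[-k,k]}]$ to lift $y$ to some $\bar x^{(k)}$ with $\bar x^{(k)}_{[-k, 0]} = x_{[-k, 0]}$. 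A cluster point of $(\bar x^{(k)})$ then produces a lift of $y$ agreeing with $x$ on $(-\infty, 0]$, i.e.\ retract $0$.

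The main obstacle is the construction of the auxiliary point $x^{(k)}$: this is a local lifting problem that can fail pointwise even for open codes, since a given letter in $X$ may force the $\phi$-image of its successor. The $M$-step SFT structure of $X$ enters precisely here, allowing one to back up within a bounded window and exhibit an admissible right-extension of $x_{[-k, 0]}$ whose $\phi$-image agrees with $y$ on the window required by openness; this yields $x^{(k)}$ and hence $\bar x^{(k)}$.
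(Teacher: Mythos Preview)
Your argument for (1) has a genuine gap: the semi-continuity direction is reversed. Showing that each sublevel set $\{f \le n\}$ is closed establishes \emph{lower} semi-continuity of $f$, not upper semi-continuity. A lower semi-continuous function on a compact space attains its infimum, but need not be bounded above, so you cannot conclude that $\sup_D f < \infty$. This is not a repairable slip: any argument for (1) that does not use the SFT hypothesis on $X$ must fail, since Example~\ref{ex:examples_section2}(2) exhibits a right continuing code (with $X$ strictly sofic) that has no right continuing retract. In the paper's proof the SFT structure enters at the splicing step: after passing to a limit $(x,y)$ of a hypothetical bad sequence $(x^{(k)},y^{(k)})$ and finding a lift $z$ of $y$ left asymptotic to $x$, one must transplant a finite piece of $z$ back onto $x^{(k)}$ to produce a forbidden lift, and it is exactly the $1$-step Markov property of $X$ that makes the spliced point lie in $X$.

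For (2) your overall strategy---show $\phi$ is bi-continuing, note $\phi(X)$ is a clopen SFT in $Y$ via Lemma~\ref{prop:factor_of_SFT_is_SFT}, then invoke Remark~\ref{rem:converse_still_hold}---matches the paper's, but your route to bi-continuing is both more complicated than necessary and dependent on your flawed (1). The paper avoids both issues by establishing the retract directly: choose $l$ so that $\phi([a])$ is a union of central $(2l{+}1)$-cylinders for every symbol $a$; given $\phi(x)_{(-\infty,0]}=y_{(-\infty,0]}$, the single-symbol cylinder $_{-l}[x_{-l}]$ already has image containing the relevant cylinder around $y$, so one lifts $y$ to some $z$ with $z_{-l}=x_{-l}$ and splices $x_{(-\infty,-l]}$ onto $z_{[-l,\infty)}$ using the edge-shift structure. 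This yields retract $l$ in one step, with no auxiliary sequence $x^{(k)}$ and no appeal to (1).
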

\begin{proof}

    By recoding, we can assume that $X$ is an edge shift and $\phi$ is 1-block.

    (1) It suffices to show the case where $\phi$ is onto. We claim that there exists $n \in \setN$ such that whenever $x \in X$ and $y \in Y$ with $\phi(x)_{(-\infty,0]} = y_{(-\infty,0]}$, we can find $\bar x \in X$ such that $\phi(\bar x)_{(-\infty,1]} = y_{(-\infty,1]}$ and $x_{(-\infty,-n]} = \bar x_{(-\infty,-n]}$.
    Suppose not. Then for each $k \in \setN$, there exist $x^{(k)} \in X$ and $y^{(k)} \in Y$ such that $\phi(x^{(k)})_{(-\infty,0]} = y^{(k)}_{(-\infty,0]}$ and there is no $\bar x \in X$ with $\phi(\bar x)_{(-\infty,1]} = y^{(k)}_{(-\infty,1]}$ and $\bar x_{(-\infty,-k]} = x_{(-\infty,-k]}$. By choosing a subsequence, we can assume that there are $x \in X$ and $y \in Y$ with $x^{(k)} \to x$, $y^{(k)} \to y$.
    Since $\phi$ is \rct\ and $\phi(x)_{(-\infty,0]} = y_{(-\infty,0]}$, there exist $z \in X$ and $m \geq 0$ such that $\phi(z) = y$ and $z_{(-\infty,-m]} = x_{(-\infty,-m]}$. Take $k > m$ satisfying $x^{(k)}_{[-m,0]} = x_{[-m,0]}$ and $y^{(k)}_{[-m,1]} = y_{[-m,1]}$.
    Define $\bar x \in X$ by letting
    \[
        {\bar x}_i =
            \begin{cases}
                (x^{(k)})_i     & \text{if $i \leq -m$}   \\
                z_i     & \text{if $i \geq -m$}
            \end{cases}
    \]
    Then $\phi(\bar x)_{(-\infty,1]} = y^{(k)}_{(-\infty,1]}$ and $\bar x_{(-\infty,-k]} = x^{(k)}_{(-\infty,-k]}$, which is a contradiction. Thus the claim holds.

    This $n$ in the claim is indeed a retract. Suppose $x \in X$ and $y \in Y$ satisfy $\phi(x)_{(-\infty,0]} = y_{(-\infty,0]}$. Let $z^{(0)} = x$. By inductive process, for each $k \in \setN$ there exists $z^{(k)} \in X$ such that $\phi(z^{(k)})_{(-\infty,k]} = y_{(-\infty,k]}$ and $z^{(k)}_{(-\infty,k-n-1]} = z^{(k-1)}_{(-\infty,k-n-1]}$. Since $z^{(k)}_{(-\infty,-n]} = x_{(-\infty,-n]}$ for all $k \in \setN$, any limit point $z$ of $\{z^{(k)}\}_{k \in \setN}$ satisfies $z_{(-\infty,-n]} = x_{(-\infty,-n]}$ and $\phi(z) = y$.

    (2) Choose $l \geq 0$ so that $\phi([a])$ consists of central $2l+1$ cylinders for all $a \in \B_1(X)$. Let $x \in X$ and $y \in Y$ satisfy $\phi(x)_{(-\infty,0]} = y_{(-\infty,0]}$. Consider $\phi({}_{-l}[x_{-l}])$. Since it contains $\phi(x)$ and $\phi(x)_{[-2l,0]}=y_{[-2l,0]}$, there exists $z \in {}_{-l}[x_{-l}]$ with $\phi(z) = y$. Now define $\bar x$ by ${\bar x}_i = {x}_i$ for $i \leq -l$ and ${\bar x}_i = {z}_i$ for $i \geq -l$. Then $\bar x$ is left asymptotic to $x$ and $\phi(\bar x) = y$, so $\phi$ is right continuing with retract $l$. Similarly $\phi$ is left continuing with retract $l$.

    Since $\phi$ is open, $Z = \phi(X)$ is open and closed in $Y$. Also $Z$ is a \SFT\ by Lemma \ref{prop:factor_of_SFT_is_SFT}. Thus by Remark \ref{rem:converse_still_hold}, $\phi$ is open with a uniform lifting length.
\end{proof}

\begin{rem}
    A code $\pXtoY$ between shift spaces is called \emph{\rct} a.e. (almost everywhere) if whenever $x \in X$ is left transitive in $X$ and $\phi(x)$ is left asymptotic to a point $y \in Y$, then there exists $\bar x \in X$ such that $\bar x$ is left asymptotic to $x$ and $\phi(\bar x) = y$. Similarly we have the notions called \lct\ a.e. and \bict\ a.e.
    A slight modification of the proof of Lemma \ref{prop:rct_from_SFT_has_retract_and_open_from_SFT_has_ull} (2) shows that an open code from a synchronized system is \bict\ a.e.
\end{rem}

\begin{thm}\label{thm:open_iff_bict}
    Let $\phi$ be a factor code from a \SFT\ $X$ to a sofic shift $Y$. Then $\phi$ is open \ifff\ it is \bict.
\end{thm}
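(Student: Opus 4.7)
The plan is to prove the two implications separately.

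For the forward direction (open $\Rightarrow$ bi-continuing), Proposition~\ref{prop:rct_from_SFT_has_retract_and_open_from_SFT_has_ull}(2) upgrades openness of $\phi$, using that $X$ is an SFT, to openness with a uniform lifting length; then the first half of Lemma~\ref{lem:open_ull_then_bict_and_converse} delivers that $\phi$ is bi-continuing (indeed with a bi-retract).

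For the reverse direction (bi-continuing $\Rightarrow$ open), applying Proposition~\ref{prop:rct_from_SFT_has_retract_and_open_from_SFT_has_ull}(1) separately to right- and left-continuing gives $\phi$ a bi-retract $n$. The natural next step is the converse in Lemma~\ref{lem:open_ull_then_bict_and_converse}, but that converse requires $Y$ to be of finite type. Remark~\ref{rem:converse_still_hold} relaxes this to the existence of an SFT $Z$ with $\phi(X) \subset Z \subset Y$ that is open and closed in $Y$. Because $\phi$ is a factor code we have $\phi(X) = Y$, so the required $Z$ can exist only if $Y$ itself is an SFT.

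The substance of the reverse direction is therefore the statement: \emph{a bi-continuing factor code from an SFT $X$ onto a sofic shift $Y$ has an SFT codomain.} My plan here is to recode so that $X$ is an edge shift and $\phi$ is a 1-block code with bi-retract $n$, and then to show that $Y$ is $N$-step for some $N$ depending on $n$ and the number of vertices in $X$'s underlying graph. Given $u, w, v \in \B(Y)$ with $uw, wv \in \B(Y)$ and $|w| \geq N$, I lift $uw$ and $wv$ to paths $p$ and $q$ in $X$; the two $w$-subpaths of $p$ and $q$ need not coincide, but the bi-retract allows altering the right end of $p$ within a window of length $\leq n$, and symmetrically the left end of $q$ within another window of length $\leq n$, so that the terminal vertex of the former equals the initial vertex of the latter. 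For $|w| \geq N$ the two windows fit disjointly inside the $w$-segment, and the modified paths splice in $X$ into a single path whose $\phi$-image contains $uwv$; hence $uwv \in \B(Y)$ and $Y$ is of finite type.

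The main obstacle is the absence of a general sofic gluing: unlike in the SFT-codomain case handled by Lemma~\ref{lem:open_ull_then_bict_and_converse}, one cannot always concatenate $\phi(x)$ on the left with $y$ on the right along an arbitrarily long central agreement to produce a point of $Y$ (the even shift supplies a concrete failure). Consequently the splicing has to be carried out inside the SFT $X$, where the bi-retract hypothesis makes the required local modifications possible, and only afterwards pushed forward by $\phi$. Once $Y$ is known to be of finite type, the converse of Lemma~\ref{lem:open_ull_then_bict_and_converse} through Remark~\ref{rem:converse_still_hold} immediately yields that $\phi$ is open with a uniform lifting length, finishing the proof.
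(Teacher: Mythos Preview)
Your forward direction is correct and essentially matches the paper's.

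For the reverse direction, your strategy---first proving that $Y$ must be of finite type, then invoking the converse of Lemma~\ref{lem:open_ull_then_bict_and_converse}---is a legitimate alternative, and the paper's remark after the theorem credits Yoo with exactly this result. But your sketch of the finite-type step has a gap. The bi-retract is a statement about bi-infinite points: given $x\in X$ and a \emph{target} $y\in Y$ with $\phi(x)_{(-\infty,0]}=y_{(-\infty,0]}$, it produces a preimage of $y$ agreeing with $x$ on $(-\infty,-n]$. It does \emph{not} say you can modify a finite $X$-path within an $n$-window while keeping its $\phi$-image fixed and prescribing its terminal vertex. In your setup there is no target $y\in Y$ to feed into the retract---a point of $Y$ containing $uwv$ is precisely what you are trying to construct---so the retract cannot be invoked as you describe. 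For a concrete failure of the splicing step, let $X$ be the edge shift on vertices $A,B$ with two self-loops at each (mapping to $0$ and $1$ respectively) and single edges $A\to B$ and $B\to A$ both mapping to $1$, and let $Y=\{0,1\}^{\bZ}$ with $\phi$ the indicated $1$-block map. Then $\phi$ is bi-continuing with bi-retract $0$, yet if you lift $uw=0^{N+1}$ to a path at $A$ and $wv=0^{N+1}$ to a path at $B$, no bounded-window modification makes them splice, since every $A$-to-$B$ path contains an edge mapping to $1$.

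The paper avoids this entirely via a fiber-product argument: choose an SFT cover $\pi:Z\to Y$, form the fiber product $(\Sigma,\psi_1,\psi_2)$ of $(\phi,\pi)$, and observe that $\psi_2:\Sigma\to Z$ is bi-continuing and now runs between two SFTs. Proposition~\ref{prop:rct_from_SFT_has_retract_and_open_from_SFT_has_ull}(1) and the converse in Lemma~\ref{lem:open_ull_then_bict_and_converse} then make $\psi_2$ open, and openness descends along the fiber-product square to $\phi$. This route never needs to know in advance that $Y$ is of finite type.
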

\begin{proof}
    Suppose first that $\phi$ is open. By Lemma \ref{prop:factor_of_SFT_is_SFT}, $Y$ is of finite type. It follows from Proposition \ref{prop:rct_from_SFT_has_retract_and_open_from_SFT_has_ull} (2) and Lemma \ref{lem:open_ull_then_bict_and_converse} that $\phi$ is \bict.

    Suppose $\phi$ is \bict. Since $Y$ is sofic, there are a \SFT\ $Z$ and a factor code $\pi : Z \to Y$. Consider the fiber product $(\Sigma,\psi_1,\psi_2)$ of $(\phi,\pi)$ \cite{LM}. Then $\Sigma$ is of finite type. By a usual fiber product argument, it is easy to show that $\psi_2$ is also \bict. Since $\Sigma$ and $Z$ are of finite type, it follows from Proposition \ref{prop:rct_from_SFT_has_retract_and_open_from_SFT_has_ull} (1) and Lemma \ref{lem:open_ull_then_bict_and_converse} that $\psi_2$ is open. Since fiber product pulls down the openness of a code, it follows that $\phi$ is open \cite[Lemma 2.4]{Jung}.
\end{proof}

\vspace{0.02cm}
\begin{rem}
    Recently, J. Yoo \cite{Yoo} showed us that a \rct\ factor of a \SFT\ is also of finite type. This result, combined with Proposition \ref{prop:rct_from_SFT_has_retract_and_open_from_SFT_has_ull} (1) and Lemma \ref{lem:open_ull_then_bict_and_converse}, implies the `if' part of Theorem \ref{thm:open_iff_bict}.
\end{rem}

\vspace{0.02cm}
\begin{rem}
    It is well known that a \fto\ factor code between \rSFTs\ is open \ifff\ it is \bic\ \cite{Nas}. Thus Theorem \ref{thm:open_iff_bict} can be thought as an infinite-to-one version of this fact. In the \fto\ case, we also have the following relation: A \fto\ factor code between \rSFTs\ is open exactly when it is \cto. By virtue of this relation, it is natural to ask whether an \ito\ code between \rSFTs\ is open \ifff\ it is \ito\ everywhere, i.e., $\preimageAbs{\phi}{y} = \infty$ for all $y$. However, it turns out that each implication is not true.
\end{rem}

\vspace{0.02cm}
We summarize the implications between factor codes in the following diagram. When $X$ is of finite type, then all conditions but \bict\ a.e. are equivalent by Proposition \ref{prop:rct_from_SFT_has_retract_and_open_from_SFT_has_ull} and Theorem \ref{thm:open_iff_bict}. We present examples to show that in general these conditions are different.

\vspace{0.1cm}
\begin{table}[h]
$$\xymatrix{
    \txt{\textsf{open with a} \\ \textsf{uniform lifting length}} \ar@{=>}[dd]^{\text{(a)}} \ar@{=>}[rr]  & & \txt{\textsf{\bict\ with} \\ \textsf{a bi-retract}} \ar@{=>}[dd]_{\text{(b)}} \ar@/_1.5pc/@{:>}[ll]_{\text{$Y$=SFT}} \\
    & & \\
    \textsf{open} \ar@{=>}[rdd]_{\text{$X$=synchronized} \quad }^{\text{(c)}}   & &\textsf{\bict} \ar@{=>}[ldd]_{\text{(d)}}  \\
    & & \\
    & \textsf{\bict\ a.e.} &
}$$
\end{table}

\vspace{0.05cm}
\begin{exam} \label{ex:examples_section2}
    (1) Let $X = \X_{W_1}$ and $Y = \X_{W_2}$, where $W_1 = \{ ab^k c^k : k \geq 0\}$ and $W_2 = \{ a, abc \}$.
    Define $\Phi : \B_3(X) \to \A(Y)$ as follows: $\Phi(abc) = b, \Phi(bca) = c$ and  $\Phi(u) = a$ otherwise. Let $\phi : X \to Y$ be a code defined by $\phi(x)_i = \Phi(x_{[i-1,i+1]})$. Note that $\phi$ replaces each $b^k c^k, k > 1$ with $a^{2k}$ and also $b^\infty$, $c^\infty$ with $a^\infty$.

    First we show that $\phi$ is not \rct. Take $x = b^\infty \in X$ and $y = a^\infty . (abc)^\infty \in Y$. If $z \neq b^\infty$ is left asymptotic to $x$, then $z$ is in the orbit of $b^\infty.c^\infty$, and
    we have $\phi(z) = a^\infty \neq y$. Thus $\phi$ is not \rct. Similarly it is not \lct.

    Next we show that $\phi$ is open. Let $U$ be an open set of $X$ and $x \in U$. Since $U$ is open, there is $l \in \setN$ with $V = {}_{-l}[x_{-l} \cdots x_l] \subset U$. We claim that there is $\bar x \in V$ such that $\phi(\bar x) = \phi(x)$ and there are $i \leq -l$ and $j \geq l$ with $\bar x_i = \bar x_j = a$.
    If $x$ already has this property, we are done. Otherwise, there are two cases (up to symmetry):

    \vspace{0.1cm}
    \textsf{Case} 1. There is $i \leq -l$ with $x_i = a$ and $x_j \neq a$ for all $j \geq l$. Choose $\bar j \in \setZ$ such that $x_{\bar j} = a$ is the rightmost occurrence of $a$ in $x$. Let $\bar x = x_{(-\infty,l]} b^2 c^{l - \bar j + 2} a^\infty$. Since $x_{(\bar j, \infty)}$ must be of the form $b^\infty$, it follows that $\bar x \in V$ and $\phi(\bar x) = \phi(x)$.

    \vspace{0.1cm}
    \textsf{Case} 2. $a$ does not occur in $x_{(-\infty,-l]}$ and $x_{[l,\infty)}$. Suppose first that $a$ occurs in $x_{(-l,l)}$. Let $\bar i$ and $\bar j$ be the leftmost and rightmost occurrence of $a$ in $x$, respectively. Then $\bar x = a^\infty b^{\bar i - l + 2} c^2 x_{[-l,l]} b^2 c^{l - \bar j + 2} a^\infty \in X$ is the desired point. Otherwise, $x$ must be one of the following form: $b^\infty$, $c^\infty$, or $b^\infty c^\infty$. If $x = b^\infty$, then $\bar x = a^\infty b^l . b^{l+1} c^{2l+1} a^\infty$ satisfies the property. Other cases can be proved similarly. This proves the claim.

    \vspace{0.1cm}
    Take $i \leq -l$ and $j \geq l$ such that $\bar x_i = \bar x_j = a$. For each $y \in Y$ with $y_{[i,j]} = \phi(x)_{[i,j]}$, let $z = y_{(-\infty,i)}\bar x_{[i,j]}y_{(j,\infty)}$. Then we have $z \in V$ and $\phi(z) = y$. So ${}_i [\phi(x)_{[i,j]}] \subset \phi(U)$. Thus $\phi$ is an example of an open code which is neither right nor left continuing. By Lemma \ref{lem:open_ull_then_bict_and_converse}, $\phi$ has no uniform lifting length. Thus the converse of (a) does not hold in general.

    \vspace{0.2cm}
    (2) This example is due to J. Yoo \cite{Yoo}. Let $X$ be a shift space on the alphabet $\{ 1, \bar 1, 2, 3\}$ defined by forbidding $\{ \bar 1 2^n 3 : n \geq 0\}$, and $Y$ the full 3-shift $\{1,2,3\}^\setZ$. Define $\pXtoY$ by letting $\phi(\bar 1) = 1$ and $\phi(a) = a$ for all $a \neq \bar 1$.

    We first show that $\phi$ is \rct: Suppose $x \in X$ and $y \in Y$ satisfy $\phi(x)_{(-\infty,0]} = y_{(-\infty,0]}$. If $x^{(1)} = x_{(-\infty,-1]}.x_0 y_{[1,\infty)}$ is in $X$, then we are done. Otherwise, the word $\bar 1 2^n 3$ occurs exactly once in $x^{(1)}$. Let $x^{(2)}$ be the point obtained from $x^{(1)}$ by replacing $\bar 1 2^n 3$ with $1 2^n 3$. Then $\phi(x^{(2)}) = y$ and $x^{(2)}$ is left asymptotic to $x$. But considering $x = \bar 1^\infty 2^n . 22^\infty$ and $y = 1^\infty 2^n . 23^\infty$ for each $n \in \setN$, one can see that $\phi$ has no (right continuing) retract.

    Now we show that $\phi$ is left continuing with a (left continuing) retract. If $x \in X$ and $y \in Y$ satisfy $\phi(x)_{[0,\infty)} = y_{[0,\infty)}$, then by letting $\bar x = y_{(-\infty,-1]}.x_{[0,\infty)}$, we have $\bar x \in X$, $\phi(\bar x) = y$ and $\bar x_{[0,\infty)} = x_{[0,\infty)}$. Thus $\phi$ is \lct\ with retract 0, hence it is \bict\ but does not have a right continuing retract. So a continuing code may not have a retract and the converse of (b) does not hold.

    Moreover, $\phi$ is an example of a \bict\ code which is not open. For, if $\phi$ is open, then $\phi([\bar 1]) = \bigcup_{i=1}^{k} C_i$ for central $2l+1$ cylinders $C_i$ in $Y$. Since $1^\infty.12^\infty \in \phi([\bar 1])$, there exists $C_i$ of the form ${}_{-l}[1^{l+1}2^l]$. Let $y=1^\infty.12^l3^\infty \in C_i$. Then $y \notin \phi([\bar 1])$, a contradiction comes. Indeed $X$ is an irreducible strictly sofic shift.

    \vspace{0.2cm}
    (3) As is easily seen, if $Y$ is the even shift and $\phi : \X_A \to Y$ is its canonical cover (i.e., a code given by its minimal right resolving presentation \cite{LM}), then $\phi$ is \bict\ a.e. but neither open nor \bict. Thus the converses of (c) and (d) are false even if the domain is of finite type.
\end{exam}

\vspace{0.3cm}
\section{Almost specified shifts}\label{Section_III:Almost_Specifications}

To investigate the existence of continuing codes, we consider the almost specified shifts. In this section we present several properties of almost specified shifts defined as follows. These will be used in subsequent sections.

\begin{defn} \label{def:specification+almost_specification}
    Let $X$ be a shift space.
    \begin{enumerate}
        \item \label{def:specification} $X$ has the \emph{specification property} if there exists $N \in \setN$ such that for all $u,v \in \B(X)$, there exists $w \in \B_N(X)$ with $uwv \in \B(X)$.
        \item \label{def:almost_specification} $X$ has the \emph{almost specification property} (or $X$ is \emph{almost specified}) if there exists $N \in \setN$ such that for all $u,v \in \B(X)$, there exists $w \in \B(X)$ with $uwv \in \B(X)$ and $|w| \leq N$.
    \end{enumerate}
\end{defn}

Note that an \rSofic\ is \as.

\begin{lem} \cite{Ber} \label{lem:almost_specified_then_synchronized}
    If $X$ is an \ass, then it is a synchronized system.
\end{lem}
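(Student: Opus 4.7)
The plan is to adapt Bertrand's argument by locating a word in $\B(X)$ whose follower set is minimal among its left extensions. For $v \in \B(X)$, write $F(v) = \{w \in \B(X) : vw \in \B(X)\}$ and $P(v) = \{u \in \B(X) : uv \in \B(X)\}$. Since $F(uv) \subseteq F(v)$ whenever $uv \in \B(X)$, a word $v$ is synchronizing precisely when $F(uv) = F(v)$ for every $u \in P(v)$; equivalently, when no left extension of $v$ strictly shrinks its follower set.

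Let $N$ be the almost-specification constant. The first step is to reformulate almost specification as a uniform-richness statement: for every $v, w \in \B(X)$ there is a bridge $s$ with $|s| \leq N$ and $vsw \in \B(X)$, so $sw \in F(v)$. Choosing such an $s(w)$ for each $w \in \B_{k-N}(X)$, the map $w \mapsto (|s(w)|, s(w)w)$ is injective (recover $w$ as the suffix of length $k-N$), which yields the uniform bound
\[
    |\B_{k-N}(X)| \;\leq\; \sum_{j=k-N}^{k} |F(v) \cap \B_j(X)|
\]
for all $k \geq N$ and all $v \in \B(X)$. In particular every follower set has full entropy $h(X)$ and cannot be ``too thin.''

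The second step is the construction itself. Starting from any $v_0 \in \B(X)$, if $v_0$ is not synchronizing then some $u_0 \in P(v_0)$ satisfies $F(u_0 v_0) \subsetneq F(v_0)$; set $v_1 = u_0 v_0$ and iterate. This produces a strictly descending chain $F(v_0) \supsetneq F(v_1) \supsetneq \cdots$, whose termination yields a synchronizing word.

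The main obstacle I expect lies exactly in forcing this chain to terminate. Unlike in the sofic case, the quotient of $\B(X)$ under the equivalence $v \sim v'$ iff $F(v) = F(v')$ need not be finite, so one cannot appeal to a finite-state presentation. Instead, termination must be extracted quantitatively from the uniform lower bound of the second paragraph, by controlling how much each step of the chain can remove from $F(v_n) \cap \B_j(X)$ within a window of size comparable to $N$; any strict loss must be balanced against the uniform lower bound, eventually producing the needed contradiction. This quantitative extraction is the technical heart of the argument; once the chain is shown to be finite, the terminal word is synchronizing by construction and $X$ is a synchronized system.
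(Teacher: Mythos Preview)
Your proposal has a genuine gap at exactly the point you flag as the ``technical heart.'' The uniform lower bound
\[
    |\B_{k-N}(X)| \;\leq\; \sum_{j=k-N}^{k} |F(v) \cap \B_j(X)|
\]
holds for \emph{every} $v \in \B(X)$, in particular for every $v_n$ in your chain. Hence it cannot distinguish $F(v_n)$ from $F(v_{n+1})$ and gives no leverage against an infinite strictly decreasing sequence $F(v_0) \supsetneq F(v_1) \supsetneq \cdots$. In a nonsofic shift there are infinitely many follower classes, so nothing you have written rules out such a chain; the ``quantitative extraction'' is asserted but not carried out, and I do not see how to carry it out along the lines you suggest.

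The paper (following Bertrand) sidesteps this entirely by replacing the infinite follower set with a \emph{finite} invariant. For $u,v \in \B(X)$ set $B(u,v) = \{\,w : uwv \in \B(X),\ |w| \le N\,\}$. By almost specification each $B(u,v)$ is nonempty, and by the length bound it is finite. Extending $u$ to the left or $v$ to the right can only shrink $B(u,v)$, so any strictly decreasing chain of such sets terminates; at the terminal step one has $B(u\bar u,\bar v v) = B(\bar u,\bar v)$ for all admissible extensions $u,v$, and then $\bar u w \bar v$ is synchronizing for any $w \in B(\bar u,\bar v)$. The point is that the almost specification constant $N$ manufactures a finite combinatorial object whose monotonicity forces termination, whereas your follower sets are infinite and admit no such immediate descent argument.
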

\begin{proof}
    Let $N$ satisfy the condition in Definition \ref{def:specification+almost_specification} (\ref{def:almost_specification}). For given $u,v \in \B(X)$, let $B(u,v)$ be the set of words $w$ such that $uwv \in \B(X)$ and $|w| \leq N$. Note that $B(u,v)$ is a nonempty finite set. Fix two words $u^{(0)}$ and $v^{(0)}$ in $\B(X)$. We define $u^{(n)}$ and $v^{(n)}$ inductively as follows.
    Suppose that $u^{(0)}, \cdots, u^{(n-1)}$ and $v^{(0)}, \cdots, v^{(n-1)}$ are given. If there are $u, v \in \B(X)$ such that
    $$\emptyset \neq B(u u^{(n-1)} \cdots u^{(0)}, v^{(0)} \cdots v^{(n-1)} v) \varsubsetneq B(u^{(n-1)} \cdots u^{(0)}, v^{(0)} \cdots v^{(n-1)}),$$
    then let $u^{(n)} = u$ and $v^{(n)} = v$. Since $B(u,v)$'s are nonempty finite sets, this process eventually terminates in the sense that there is $m \in \setN$ such that by letting $\bar u = u^{(m)} \cdots u^{(0)}$ and $\bar v = v^{(0)} \cdots v^{(m)}$ we have $B(u \bar u, \bar v v) = B(\bar u, \bar v)$ for all $u, v \in \B(X)$ with $u \bar u, \bar v v \in \B(X)$. Then $\bar u w \bar v$ is a synchronizing word for each $w \in B(\bar u, \bar v)$.
\end{proof}

The following lemma shows that the \aspe\ is a natural irreducible version of the \spe.

\begin{lem} \label{lem:spe_iff_aspe}
    Let $X$ be a mixing shift space. Then $X$ has the \aspe\ \ifff\ it has the \spe.
\end{lem}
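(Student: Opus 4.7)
The plan is to treat the two directions separately. The forward implication is essentially definitional: if $X$ satisfies Definition \ref{def:specification+almost_specification}(\ref{def:specification}) with constant $N$, then any bridging word of length exactly $N$ is also a bridging word of length at most $N$, so condition (\ref{def:almost_specification}) holds with the same $N$. All the content therefore lies in the converse.

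For the converse, assume $X$ is mixing and has the \aspe\ with constant $N$. The strategy is to use a synchronizing word as a common anchor so that the variable-length bridges produced by almost specification can be padded out to a single uniform length using mixing. By Lemma \ref{lem:almost_specified_then_synchronized}, fix a synchronizing word $s \in \B(X)$ of length $L$. Given arbitrary $u,v \in \B(X)$, apply the \aspe\ to the pairs $(u,s)$ and $(s,v)$ to obtain $w_1, w_2 \in \B(X)$ with $|w_i| \leq N$ and $u w_1 s,\; s w_2 v \in \B(X)$. Then, since $X$ is mixing and the pair $(s,s)$ is fixed once and for all, there exists an integer $M$ (depending only on $s$) such that for every $m \geq M$ some $w \in \B_m(X)$ satisfies $s w s \in \B(X)$.

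Set $N' = 2N + 2L + M$. For the given $u, v$, choose $w \in \B(X)$ of length exactly $N' - |w_1| - |w_2| - 2L$ with $sws \in \B(X)$; since $|w_1|, |w_2| \leq N$, this prescribed length lies in $[M,\, M + 2N]$, so such a $w$ exists by the mixing step. Two successive applications of the defining property of the synchronizing word $s$, first gluing $sws \in \B(X)$ to $s w_2 v \in \B(X)$ and then gluing $uw_1 s \in \B(X)$ to the resulting word $s w s w_2 v \in \B(X)$, produce $u w_1 s w s w_2 v \in \B(X)$, whose middle portion between $u$ and $v$ has length $|w_1| + L + |w| + L + |w_2| = N'$. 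Hence the \spe\ holds with constant $N'$.

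The only delicate point is the third step, where mixing is needed in an essential way: without mixing one could not fix a single length $M$ at which bridges $sws$ exist, so the length of the final bridge could not be made uniform over $u,v$. The role of Lemma \ref{lem:almost_specified_then_synchronized} is to reduce the universal requirement of a common bridge length from arbitrary pairs $(u,v)$ to the single pair $(s,s)$, where pointwise mixing already suffices. I expect no other obstacle, since the arithmetic of lengths and the two synchronization gluings are routine.
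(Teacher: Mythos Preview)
Your proof is correct and follows essentially the same approach as the paper's: both use Lemma \ref{lem:almost_specified_then_synchronized} to obtain a synchronizing word, bridge $u$ and $v$ to it via almost specification, and then use mixing on the fixed pair $(s,s)$ to pad to a uniform length, with the same final constant $2N + 2|s| + M$. The only quibble is that you have the ``forward'' and ``converse'' labels swapped relative to the statement as written, but this does not affect the mathematics.
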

\begin{proof}
    Suppose $X$ is \as.
    Let $N$ be given as in Definition \ref{def:specification+almost_specification} (2) and by using Lemma \ref{lem:almost_specified_then_synchronized} take a synchronizing word $w \in \B(X)$. Since $X$ is mixing, there is $L > 0$ such that for all $l \geq L$, we can find $u \in \B_l(X)$ with $wuw \in \B(X)$. Let $M = 2N + 2|w| + L$. For any $u, v \in \B(X)$, there exist $\bar u, \bar v \in \B(X)$ with $|\bar u|, |\bar v| \leq N$ and $u \bar u w, w \bar v v \in \B(X)$. Also we can find $\bar w \in \B_{L+2N-|\bar u|-|\bar v|}(X)$ with $w\bar w w \in \B(X)$. Let $\tilde w = \bar u w \bar w w \bar v$. Then $u \tilde w v \in \B(X)$ and $|\tilde w| = M$. Thus $M$ satisfies the condition in Definition \ref{def:specification+almost_specification} (\ref{def:specification}) and $X$ has the \spe.
\end{proof}

For a shift space $X$, denote by $X^-$ ($X^+$, resp.) the set of left (right, resp.) infinite sequences of $X$. For $x \in X$, we denote by $x^-$ (resp. $x^+$) the sequence $x_{(-\infty,-1]} \in X^-$ (resp. $x_{[0,\infty)} \in X^+$). If $x^- \in X^-$ and $u \in \B(X)$, their concatenation $x^- u$ is naturally defined. We write $x^- u \in X^-$ when there exists a sequence $y \in X^+$ such that $x^- u y \in X$. A similar operation is defined on $\B(X)$ and $X^+$. Then we have the following lemma whose proof is obvious by compactness.

\begin{lem} \label{lem:spe_aspe_iff_connectable}
    Let $X$ be a shift space.
    \begin{enumerate}
        \item $X$ has the \spe\ \ifff\ there is $N \in \setN$ such that when- ever $x^- \in X^-$ and $u \in \B(X)$, there is $w \in \B_N(X)$ such that $x^- w u \in X^-$.
        \item $X$ is \as\ \ifff\ there is $N \in \setN$ such that whenever $x^- \in X^-$ and $u \in \B(X)$, there is $w \in \B(X)$ such that $|w| \leq N$ and $x^- w u \in X^-$.
    \end{enumerate}
\end{lem}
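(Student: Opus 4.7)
The plan is to prove both (1) and (2) by essentially the same compactness argument, as suggested by the author's parenthetical remark.

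For the backward implications, given $u,v \in \B(X)$, I pick any point $z \in X$ in which $u$ appears and, after a shift, assume $z_{[-|u|,-1]} = u$; set $x^- = z_{(-\infty,-1]} \in X^-$. Applying the hypothesis to this $x^-$ and to $v \in \B(X)$ yields a word $w$ (of length exactly $N$ in case (1), of length at most $N$ in case (2)) such that $x^- w v \in X^-$. By definition of $X^-$, this extends to a full point $y \in X$, and the subword $uwv$ appears in $y$ at positions $[-|u|, |w|+|v|-1]$, so $uwv \in \B(X)$. This gives the \spe\ and \aspe\ respectively.

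For the forward implication in (1), suppose $X$ has the \spe\ with constant $N$, and fix $x^- \in X^-$ and $u \in \B(X)$. For each $n \in \setN$, the finite word $x_{[-n,-1]}$ lies in $\B(X)$, so by specification there is $w_n \in \B_N(X)$ with $x_{[-n,-1]} w_n u \in \B(X)$. Choose $y^{(n)} \in X$ realizing this word with $x_{[-n,-1]}$ positioned at indices $[-n,-1]$, $w_n$ at $[0,N-1]$, and $u$ at $[N,N+|u|-1]$. Since $\B_N(X)$ is finite, I pass to a subsequence along which $w_n$ is constantly equal to some $w \in \B_N(X)$; by compactness of $X$, extract a further subsequence $y^{(n_k)} \to y \in X$. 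Then $y_{(-\infty,-1]} = x^-$, $y_{[0,N-1]} = w$, and $y_{[N,N+|u|-1]} = u$, which shows $x^- w u \in X^-$.

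The forward implication in (2) is identical, except that now $w_n$ only satisfies $|w_n| \leq N$; since $\bigcup_{k=0}^{N} \B_k(X)$ is still a finite set, I can pass to a subsequence along which both $|w_n|$ and $w_n$ itself are constant, after which the compactness step is unchanged. There is no real obstacle here: the only point to keep track of is that one must extract the constant value of $w_n$ \emph{before} taking the limit in $X$, which is made possible precisely by the finiteness of words of bounded length in $\B(X)$.
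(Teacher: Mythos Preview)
Your argument is correct and is precisely the compactness argument the paper alludes to (the paper omits the proof entirely, remarking only that it is ``obvious by compactness''). The backward implications are immediate from the definitions, and for the forward implications your extraction of a constant connecting word via pigeonhole on the finite set $\bigcup_{k \le N}\B_k(X)$ followed by a compactness limit in $X$ is exactly what is needed.
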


\begin{exam} \label{ex:Sgap}
    Let $S = \{ n_1, n_2, \ldots \} \subset \Zplus$ with $n_i < n_{i+1}$ (possibly, finite) and $X = \X(S)$ the coded system generated by $\{ 10^n : n \in S \}$.
    This $\X(S)$ is called the $S$-gap shift \cite{LM}. Note that $X$ is a synchronized system. Then we have the following characterizations.
    \begin{enumerate}
        \item $X$ is \as\ \ifff\ $\sup_i |n_{i+1} - n_i| < \infty$.
        \item $X$ is mixing \ifff\ $\gcd\{n + 1 : n \in S \} = 1$.
        \item $X$ has the \spe\ \ifff\ $\sup_i |n_{i+1} - n_i| < \infty$ and $\gcd\{n + 1 : n \in S \} = 1$.
    \end{enumerate}
\end{exam}
\begin{proof}
    (1) If $S$ is finite, then $X$ is of finite type (hence almost specified) and $\sup_i |n_{i+1} - n_i| < \infty$, thus the equivalence is clear. So we may assume $S$ is infinite. Suppose that $X$ is \as. Let $N$ be given as in Definition \ref{def:specification+almost_specification} (\ref{def:almost_specification}). Then for each $n_i \in S$, there exists $w$ such that $10^{n_i + 1} w 1 \in \B(X)$ and $|w| \leq N$. Thus $ |n_{i+1} - n_{i}| \leq N+1$ for all $i$. Conversely, if $L = \sup_i |n_{i+1} - n_i| < \infty$, then let $N = \max(n_1,L)$. For any $v, w \in \B(X)$, there exist $i, j \leq N$ with $v0^i 1, 1 0^j w \in \B(X)$. Then $v0^i 1 0^j w \in \B(X)$ and $|0^i 1 0^j| \leq 2N+1$. Thus $X$ is \as.

    (2) If $X$ is mixing, then there exists $N > 0$ such that for all $n \geq N$, there exists $w \in \B_n(X)$ with $1w1 \in \B(X)$. Thus there are words of length $N+1$ and $N+2$ of the form $10^{i_1}10^{i_2} \ldots 10^{i_m}$ with $i_k \in S$, implying that $\gcd\{n + 1 : n \in S \} = 1$.

    On the other hand, if $\gcd\{n + 1 : n \in S \} = 1$ then for all sufficiently large $n$, there is a word of length $n$ of the form $10^{i_1}10^{i_2} \ldots 10^{i_m}1$. Since $1$ is synchronizing and $X$ is irreducible, the result follows.

    (3) This follows from (1), (2) and Lemma \ref{lem:spe_iff_aspe}.
\end{proof}

\begin{exam}
    We list some examples reflecting strict inclusions between classes of shift spaces. \begin{enumerate}
        \item Let $X = \X(S)$ be the $S$-gap shift, where $$ S = ( 2 \setN + 1 ) \setminus \biggl[ \bigcup_{k=1}^{\infty} \big( k\setN \cap (10^{k-1} , 10^k )  \big) \biggr].$$
            Then it is a nonsofic \as\ shift without the specification property.
        \item For any $X$ with the \spe, define its root $\widetilde X$ by
            $$\widetilde X = \set{ ({\bar x}_i)_{i \in \setZ} : \exists x \in X \text{ with } {\bar x}_{2i+1} = a, {\bar x}_{2i} = x_i \text{ for all } i \in \setZ},$$
            where $a$ is a symbol which is not in $\B_1(X)$. Then $Y = \widetilde X \cup \sigma (\widetilde X)$ is \as, but does not have the \spe.
        \item Let $W = \{ ab^k c^k : k \geq 1\}$ and $X = \X_W$. It is easy to see that $X$ is mixing and synchronized. But if $x^- = b^\infty$, then there is no $w \in \B(X)$ such that $x^- w a \in X^-$. By Lemma \ref{lem:spe_aspe_iff_connectable}, $X$ cannot be \as.
    \end{enumerate}
\end{exam}

A synchronized system has a global periodic structure similar to an \rSFT. The collection of the subsets $D_i$ in the following theorem is called \emph{the cyclic cover of $X$}. It is unique up to cyclic permutation.

\begin{thm}\label{thm:cyclic_cover}\cite{Tho}
    Let $X$ be a synchronized system. Then there exist a unique $p \in \setN$ and closed sets $D_i \subset X$, $i = 0, 1, \cdots, p-1$, such that
    \begin{enumerate}
        \item[i)] $X = \bigcup_{i=0}^{p-1} D_i$,
        \item[ii)] $\sigma(D_i) = D_{i+1 \modp}$,
        \item[iii)] $\sigma^p|_{D_i}$ is mixing for all $i = 0 , 1, \cdots, p-1$, and
        \item[vi)] $D_i \cap D_j$ has empty interior when $i \neq j$.
    \end{enumerate}
\end{thm}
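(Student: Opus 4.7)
The plan is to build the cyclic cover out of a synchronizing word $v$ of $X$, using the additive semigroup of displacements between its occurrences. Fix any synchronizing $v \in \B(X)$ and set
\[
    S = \{\, n \in \Zplus : \exists\, w \in \B(X) \text{ with } vwv \in \B(X) \text{ and } |vw| = n \,\}.
\]
Because $v$ is synchronizing, two words $vw_1v, vw_2v \in \B(X)$ concatenate at their shared $v$ to give $vw_1vw_2v \in \B(X)$; hence $S$ is closed under addition. Let $p = \gcd(S)$. By elementary number theory there is $N_0$ with $pn \in S$ for every $n \geq N_0$. The key observation is that if $v$ occurs in some $x \in X$ at two positions $i < j$, then the subword $x_{[i,\, j+|v|-1]}$ has the form $vwv$, so $j - i \in S \subset p\setZ$. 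Consequently, for every point $x$ in which $v$ occurs, all positions of $v$ share a common residue $k(x) \in \{0, \ldots, p-1\}$ modulo $p$.

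For each $k \in \{0, \ldots, p-1\}$ define
\[
    D_k = \overline{\{\, x \in X : v \text{ occurs in } x \text{ and } k(x) = k \,\}}.
\]
Properties (i), (ii), (iv) are then routine. Irreducibility of $X$ gives, for each $u \in \B(X)$, words $\alpha, \beta$ with $v \alpha u \beta v \in \B(X)$, and synchronization of $v$ yields the periodic point $(v\alpha u \beta)^\infty$ containing $v$ and passing through $u$; so $X_0 = \{\, x \in X : v \text{ occurs in } x \,\}$ is dense in $X$. Since each point of $X_0$ lies in $D_{k(x)}$, this gives (i); (ii) is immediate from the definition. For (iv), if $x \in D_i \cap X_0$ with approximants $x_n \to x$ satisfying $k(x_n) = i$, then for $n$ large enough $x_n$ shares an occurrence of $v$ with $x$, so the key observation forces $k(x) = i$; hence $D_i \cap D_j$ contains no point of $X_0$ when $i \neq j$, and is therefore contained in the interior-free set $X \setminus X_0$.

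The main work, and the principal obstacle, is (iii): mixing of $\sigma^p$ on $D_k$. Given nonempty relatively open $U, V \subset D_k$, pass to cylinders and use density of $X_0 \cap D_k$ in $D_k$ to choose $x \in U$ and $y \in V$ with occurrences of $v$ at positions $m$ and $m'$ respectively, both congruent to $k$ modulo $p$. For each sufficiently large $n$ one constructs $z \in U$ with $\sigma^{pn}(z) \in V$ by declaring
\[
    z_i = x_i \text{ for } i \leq m+|v|-1, \qquad z_i = y_{i-pn} \text{ for } i \geq pn+m',
\]
and bridging the two prescribed copies of $v$ (at positions $m$ and $pn+m'$) by a word $w$ of length $(pn+m') - (m+|v|)$. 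The total displacement $pn + m' - m$ is divisible by $p$ since $m \equiv m' \pmod p$, and exceeds $pN_0$ for $n$ large, so it lies in $S$ and a permissible bridge $w$ exists; synchronization of $v$ (applied at both pasting points) guarantees $z \in X$, and $v$ occurs in $z$ at position $m$, so $z \in D_k$. Uniqueness of $p$ and of the $D_k$ up to cyclic relabeling follows because conditions (i)--(iv) force $p$ to be the smallest positive integer for which $X$ decomposes into $p$ closed cyclically-permuted pieces on which $\sigma^p$ is mixing, an intrinsic dynamical invariant. The delicate point throughout is the arithmetic bookkeeping tying positions, residue classes, and semigroup membership together; the whole construction leans on $S \subset p\setZ$ eventually containing every large multiple of $p$.
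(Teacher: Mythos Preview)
The paper does not prove this theorem; it is quoted from Thomsen \cite{Tho} without argument. Your construction via the return semigroup $S$ of a synchronizing word is the natural one and is essentially correct, but two places need tightening.

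In the mixing argument for (iii), the positions $m$ and $m'$ must be chosen on the correct sides of the cylinder windows: you need an occurrence of $v$ in $x$ at some $m$ to the \emph{right} of the window defining $U$, and an occurrence of $v$ in $y$ at some $m'$ to the \emph{left} of the window defining $V$; otherwise the glued point $z$ need not agree with $x$ on the $U$-window, nor $\sigma^{pn}(z)$ with $y$ on the $V$-window. This is easy to arrange --- your own density argument already produces periodic points $(v\alpha u\beta)^\infty$ in which $v$ recurs infinitely often in both directions --- but it should be said.

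Your uniqueness argument is too thin. Declaring $p$ to be ``the smallest positive integer for which $X$ decomposes into $p$ cyclically permuted pieces on which $\sigma^p$ is mixing'' does not by itself rule out a second decomposition $\{D'_j\}_{j=0}^{p'-1}$ with $p' \neq p$; you still have to show that any such $p'$ is a multiple of $p$ (and vice versa). One clean way is to note that any open set contained in some $D'_j$ must, by (iv) and Baire category, meet the interior of exactly one $D_i$, and then mixing of $\sigma^{p'}$ on $D'_j$ forces $p \mid p'$; symmetry gives $p' \mid p$. Alternatively, argue directly that your $p = \gcd(S)$ is independent of the choice of synchronizing word $v$ and is an invariant of $(X,\sigma)$, and that any decomposition satisfying (i)--(iv) must have this period.
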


If $X$ is an \rSFT, then this decomposition is well known (cf, \cite{LM}). In this case $p = \per(X)$ and $D_i \cap D_j = \emptyset$ for $i \neq j$.

Let $X^p$ be the $p$th higher power shift of $X$ and $\gamma : X \to X^p$ the $p$th higher power code \cite{LM} given by $\gamma(x)_i = x_{[ip,ip+p-1]}.$
If $u \in \B_{pM}(X)$, then $\gamma(u) \in \B_M(X^p)$ is naturally defined.
This $\gamma$ is a topological conjugacy between $(X,\sigma^p)$ and $(X^p,\sigma)$. When $X$ is a synchronized system, we define $X_i = \gamma(D_i)$. Then $X_i$ is an irreducible subshift of $X^p$ and $\gamma$ is also a topological conjugacy between $(D_i, \sigma^p)$ and $(X_i,\sigma)$ for each $0 \leq i < p$. Note that for a block $u \in \B_{pM}(X)$, $\gamma(u) \in \B(X_i)$ \ifff\ there exists a point $x \in D_i$ with $x_{[0,pM-1]} = u$.
From the construction of the cyclic cover of $X$, it follows that the sets
$$\{ \gamma(u) \in \B(X_i) : u \text{ is a synchronizing word for } X \}$$
are disjoint. This is because $X_i$ is the closure of an `\emph{irreducible subshift}' of $X^p$ in the sense of Thomsen (see \cite[\S 3]{Tho}). Thus for a synchronized word $u \in \B_{pM}(X)$, there exists a unique $i$ with $\gamma(u) \in \B(X_i)$. We use this fact to prove the following result.

\begin{lem}\label{lem:aspe_then_cover_has_specification}
    Let $X$ be an \ass. If $\{D_0, \cdots, D_p \}$ is the cyclic cover of $X$, then $\sigma^p|_{D_i}$ has the \spe.
\end{lem}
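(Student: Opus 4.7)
Since Theorem \ref{thm:cyclic_cover} gives that $\sigma^p|_{D_i}$ is mixing, and Lemma \ref{lem:spe_iff_aspe} says that for a mixing shift the almost specification property and the specification property are equivalent, my plan is to verify only the almost specification property for $\sigma^p|_{D_i}$. Working through the conjugacy $\gamma : (D_i, \sigma^p) \to (X_i, \sigma)$, this reduces to showing that for any $U, V \in \B(X_i)$ there is a bounded connecting block $W \in \B(X_i)$ with $UWV \in \B(X_i)$.

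The preparatory step is to produce, for each $j \in \{0, 1, \ldots, p-1\}$, a synchronizing word $s_j \in \B(X)$ of length a multiple of $p$ with $\gamma(s_j) \in \B(X_j)$. Start from the synchronizing word of Lemma \ref{lem:almost_specified_then_synchronized}; appending letters on the right preserves synchronization (a short check), while prepending $k$ letters produces a synchronizing word whose $D$-index is shifted by $-k \bmod p$ (a short shift argument using that $\sigma^k$ carries $D_\ell$ to $D_{\ell + k \bmod p}$ and that a synchronizing word uniquely determines its $D$-index). Choosing prefix and suffix lengths in $\{0, \ldots, p-1\}$ appropriately yields each $s_j$ of length at most $|s| + 2(p-1)$.

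Now, given $U = \gamma(u), V = \gamma(v) \in \B(X_i)$, apply the almost specification of $X$ (with constant $N$) four times to obtain $w_1, w_1', w_2', w_2 \in \B(X)$, each of length at most $N$, with
\[
    u\, w_1\, s_j, \quad s_j\, w_1'\, s_i, \quad s_i\, w_2'\, s_{j'}, \quad s_{j'}\, w_2\, v \in \B(X),
\]
where $j, j' \in \{0, 1, \ldots, p-1\}$ will be chosen below. By the synchronizing property of the $s_k$'s, the full concatenation $u\, w_1\, s_j\, w_1'\, s_i\, w_2'\, s_{j'}\, w_2\, v$ lies in $\B(X)$. The periodicity observation is that for any $z \in X$ having this concatenation as a prefix, each synchronizing occurrence of $s_j, s_i, s_{j'}$ pins down the $D$-index of $z$, and mutual consistency forces $|w_1'| \equiv i - j$ and $|w_2'| \equiv j' - i \pmod p$, while $|w_1|$ and $|w_2|$ inherit prescribed residues from the fact that $u, v$ start points in $D_i$. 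Choosing $j \equiv i + |w_1|$ and $j' \equiv i + |w_2| \pmod p$ makes the four residues telescope to zero, so that the total connecting-word length is a multiple of $p$ and the central occurrence of $s_i$ sits at a position divisible by $p$; the synchronizing property of $s_i$ then forces $z \in D_i$. Therefore $\gamma$ of the whole expression lies in $\B(X_i)$, and its length is bounded by $(4N + 3\max_k |s_k|)/p$, establishing almost specification of $(X_i, \sigma)$.

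The main obstacle is this modular alignment: almost specification of $X$ offers no direct control over the residue modulo $p$ of a connecting word, so a naive single-synchronizing-word insertion fails to guarantee that $s_i$ sits at a multiple-of-$p$ position. The three-insertion trick above bypasses this by exploiting that any connecting word between two synchronizing words of known $D$-index is itself forced to have a specific residue modulo $p$; this turns the rigidity of synchronization into a telescoping cancellation that delivers divisibility by $p$ for the overall connecting word.
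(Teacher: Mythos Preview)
Your reduction (mixing $+$ almost specified $\Rightarrow$ specified, via Lemma \ref{lem:spe_iff_aspe}) and your use of $\gamma$ to pass to $X_i$ are exactly right, and match the paper. The gap is in the modular-alignment step.

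You assert that ``$|w_1|$ and $|w_2|$ inherit prescribed residues from the fact that $u,v$ start points in $D_i$.'' This is not true. The hypothesis $\gamma(u)\in\B(X_i)$ only says that \emph{some} point of $D_i$ begins with $u$; if $u$ is not synchronizing there may also be points of $D_{i'}$, $i'\neq i$, beginning with $u$, and almost specification may well hand you a $w_1$ coming from such a point. So $|w_1|\bmod p$ is not determined by $i$ and $j$. This in turn makes your choice ``$j\equiv i+|w_1|$'' circular: $w_1$ is, by definition, a word with $u\,w_1\,s_j\in\B(X)$, so it already depends on $j$. Trying each $j$ and hoping for a fixed point of $j\mapsto i+|w_1^{(j)}|$ is not justified either. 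The same problem recurs for $j'$ and $w_2$ (where, incidentally, the correct constraint is $j'\equiv i-|w_2|$, not $i+|w_2|$). The three-insertion scheme therefore does not actually deliver the divisibility you need.

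The paper's argument sidesteps this with a much lighter device: instead of inserting synchronizing words in the middle, it \emph{extends $\tilde u$ to the left and $\tilde v$ to the right inside $\B(X_i)$} so that the corresponding $u,v$ become synchronizing in $X$. (This uses irreducibility of $X_i$ and the existence of a synchronizing word with $\gamma$-image in $\B(X_i)$.) Now every point beginning with $u$ lies in $D_i$, and likewise for $v$; a single application of almost specification yields $w$ with $uwv\in\B(X)$, and comparing the $D$-indices forced by $u$ and by $v$ gives $|w|\equiv 0\pmod p$ directly. The extension is a harmless ``without loss of generality'': the connecting word $\gamma(w)$ for the original $\tilde u,\tilde v$ is a subword of what you get for the extended ones, so the length bound $N/p$ survives. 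Your approach can be repaired along the same lines---make $u$ and $v$ synchronizing first---after which the multi-insertion machinery becomes unnecessary.
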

\begin{proof}
    It suffices to show the case $i=0$. Note that $\sigma^p|_{D_0}$ has the \spe\ \ifff\ $X_0$ has the \spe.
    Since $X$ is \as, there exists $N$ satisfying the condition in Definition \ref{def:specification+almost_specification} (\ref{def:almost_specification}). We claim that $X_0$ is also \as.

    Suppose that $\tilde u = \gamma(u), \tilde v = \gamma(v) \in \B(X_0)$. By extending $\tilde u$ to the left and $\tilde v$ to the right, we may assume that $u, v$ are synchronizing words for $X$. Note that $|u|, |v|$ are multiples of $p$. Since $X$ is \as, there exists a word $w \in \B(X)$ with $l = |w| \leq N$ such that $uwv \in \B(X)$. Let $x \in X$ be a point with $x_{[0,|uwv|-1]} = uwv$. Since $u$ is synchronizing and $\gamma(u) = \tilde u \in \B(X_0)$, we have $\gamma(x) \in X_0$ (by the remark following Theorem \ref{thm:cyclic_cover}). Thus $\gamma(\sigma^{|u|+l}(x)) \in X_{l \modp}$, and we get $\tilde v \in \B(X_{l \modp})$. Since $v$ is synchronizing and $\tilde v \in \B(X_0)$, it follows that $l = 0 \modp$ and we have $$\tilde u \gamma(w) \tilde v = \gamma(uwv) \in \B(X_0).$$
    Thus $X_0$ is \as, as desired. Since $X_0$ is mixing, the result follows from Lemma \ref{lem:spe_iff_aspe}.
\end{proof}

\begin{prop} \label{prop:aspe_then_SFT_exists}
    Let $X$ be an \ass\ with $h(X) > 0$. For given $\epsilon > 0$, there exist an \rSFT\ $Z \subset X$ and $k \in \setN$ such that $h(Z) > h(X) - \epsilon$ and any $k$-block of $Z$ is a synchronizing word of $X$. If $X$ is mixing, then $Z$ can be chosen to be mixing.
\end{prop}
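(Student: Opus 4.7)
The plan is to take an SFT envelope of a concatenation subshift built around a synchronizing word of $X$. By Lemma \ref{lem:almost_specified_then_synchronized} fix a synchronizing word $w$ of $X$; set $m = |w|$ and let $N$ be the almost-specification constant. For $\ell \geq 1$ define $V_\ell = \{v \in \B_\ell(X) : wvw \in \B(X)\}$; iterating the synchronization property of $w$, every word of the form $wv_1 w v_2 \cdots w v_r w$ with $v_i \in \bigcup_\ell V_\ell$ lies in $\B(X)$.

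The first substantive step is to show $(\log |V_\ell|)/\ell \to h(X)$ along some sequence $\ell \to \infty$. Given $u \in \B_n(X)$, almost specification supplies $y_1, y_2 \in \B(X)$ with $|y_i| \leq N$ and $w y_1 u y_2 w \in \B(X)$, so $y_1 u y_2 \in V_{n + |y_1| + |y_2|}$; the map $u \mapsto (y_1 u y_2, |y_1|, |y_2|)$ is injective, whence $\max_{\ell \in [n, n+2N]} |V_\ell| \geq |\B_n(X)|/(N+1)^2$, and the claim follows by letting $n \to \infty$. Fix such a large $\ell$, set $M = \ell + m$, and let $Z_0 \subseteq X$ be the orbit closure of all bi-infinite concatenations $\cdots (wv_{-1})(wv_0)(wv_1) \cdots$ with $v_i \in V_\ell$; by the synchronization remark $Z_0$ is an irreducible subshift of $X$ of entropy $(\log |V_\ell|)/M$, which is $> h(X) - \epsilon$ for $\ell$ large. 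Next define $Z \subseteq X$ to be the SFT whose allowed $2M$-blocks are precisely the elements of $\B_{2M}(Z_0)$; then $Z_0 \subseteq Z$, so $h(Z) > h(X) - \epsilon$, and the irreducibility of $Z_0$ produces $Z_0$-joinings for the length-$2M$ boundary subblocks of any two words of $Z$, hence $Z$ is irreducible. Because $w$ occurs in every $Z_0$-point at positions spaced exactly $M$ apart, each element of $\B_{2M}(Z_0) = \B_{2M}(Z)$ contains $w$; a block containing a synchronizing subword is itself synchronizing, so $k := 2M$ finishes the irreducible case.

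For the mixing case the SFT just constructed has period $M$, so I would redo the construction with two coprime generator lengths. Assuming $X$ mixing, Lemma \ref{lem:spe_iff_aspe} gives the specification property, and mixing further ensures that for all sufficiently large $\ell$ both $V_\ell$ and $V_{\ell+1}$ are nonempty with exponential growth rate tending to $h(X)$. Using the enlarged generator set $\{wv : v \in V_\ell \cup V_{\ell+1}\}$, of the two coprime lengths $M$ and $M+1$, produces an irreducible $Z_0'$ whose periodic-point periods are sums of $M$'s and $(M+1)$'s, and these have greatest common divisor $1$, so $Z_0'$ is mixing; the same boundary-joining argument, now invoking arbitrarily long $Z_0'$-joinings provided by mixing, transfers mixing to the SFT envelope $Z'$. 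The main technical obstacle is that the bare orbit closure $Z_0$ is in general not itself of finite type---ambiguous parsings arise as soon as $w$ occurs in the interior of some $v \in V_\ell$---so the passage to the $2M$-block envelope $Z$ is essential and must be verified at every step to preserve entropy, the synchronizing-block property, and (in the mixing case) mixing.
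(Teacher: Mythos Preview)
Your approach is correct and genuinely different from the paper's. The paper first reduces the almost specified case to the specification case via the cyclic cover (Lemma~\ref{lem:aspe_then_cover_has_specification}), and in the specification case it defines the SFT in one stroke: $X_k$ is the $(k-1)$-step SFT whose $k$-blocks are exactly those $X$-blocks $a_1\cdots a_k$ in which $w$ occurs in $a_2\cdots a_k$. This is morally your ``envelope'' but reached without the intermediate coded system $Z_0$, and it is automatically mixing for large $k$ once $X$ has specification, so no coprime-length trick is needed. Your route instead stays with the bare almost specification hypothesis throughout, building $Z_0$ by concatenation and then passing to the $2M$-block envelope; this avoids the cyclic cover machinery entirely, at the cost of a more ad hoc treatment of the mixing case.

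One point you assert rather than argue, and which is the crux of your construction, is that the envelope $Z$ actually sits inside $X$. This is not automatic: an SFT defined by a set of $2M$-blocks of $X$ need not lie in $X$. What makes it work here is that every $2M$-block of $Z_0$ contains a copy of $w$ beginning in its first $M$ positions (because the $w$'s in $Z_0$ recur with spacing exactly $M$); hence in any $z\in Z$ consecutive occurrences of $w$ are at distance $\le M$, so the block between them has length $\le M+m\le 2M$ and is therefore an $X$-block, and iterated synchronization places $z$ in $X$. You should make this step explicit. The analogous verification in the mixing case requires taking the envelope at window length $2(M+1)$ rather than $2M$.
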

\begin{proof}
    The proof essentially follows the lines in \cite{Boy84, Mar}. First, we prove the case where $X$ has the \spe. Using Lemma \ref{lem:almost_specified_then_synchronized} find a synchronizing word $w \in \B(X)$. For each $k > |w|$, let $X_k$ be the $(k-1)$-step \SFT\ whose $k$-blocks are $a_1 \ldots a_k \in \B_k(X)$ such that $w$ occurs in $a_2 \ldots a_k$. Then $X_k \subset X$.

    Let $N$ be given in Definition \ref{def:specification+almost_specification} (\ref{def:specification}). Fix $k > 2N + 2|w|$. For each $l = mk$ with $m \in \setN$, we have $$|\B_l(X_k)| \geq |\B_{k-2N-2|w|}(X)|^m,$$
    since for each $m$ pair of words $v^{(1)}, \cdots, v^{(m)} \in \B_{k-2N-2|w|}$, there exists a word in $\B_l(X_k)$ whose initial sequence is of the form $w u^{(1)} v^{(1)} \bar u^{(1)} w u^{(2)} v^{(2)} \bar u^{(2)} w \cdots v^{(m)} \bar u^{(m)} w$, with $u^{(i)}, \bar u^{(i)} \in \B_N(X)$.
    It follows that $h(X_k) \geq \frac{1}{k} \log |\B_{k-2N-2|w|}(X)|$. Thus $\lim_{k \to \infty} h(X_k) = h(X)$. It is easy to see that $X_k$ is mixing for all large $k$. Take $k$ large so that $X_k$ is mixing, $ h(X_k) > h(X) - \epsilon$ and let $Z = X_k$.

    Next, we prove the case where $X$ is \as. Let $\{D_0, \cdots, D_p \}$ be the cyclic cover of $X$ and take $X_0 = \gamma(D_0)$ as in the remark following Theorem \ref{thm:cyclic_cover}. Since $X_0$ has the \spe\ by Lemma \ref{lem:aspe_then_cover_has_specification}, by applying the previous result to $X_0$, we get $Z_0 \subset X_0$ satisfying the properties. Define $$Z = \bigcup_{i=0}^{p-1} \sigma^i (\gamma^{-1}(Z_0)).$$
    Then $Z$ satisfies all the desired properties.
\end{proof}
\begin{rem}
    There is a synchronized system $X$ such that $\sup_{Y \subset X} h(Y) < h(X)$, where the supremum is taken over all sofic subshifts $Y$ of $X$ \cite{Pet}. Thus Proposition \ref{prop:aspe_then_SFT_exists} does not hold when $X$ is merely synchronized.
\end{rem}

\vspace{0.3cm}

\section{Extension Theorem: A mixing case}\label{sec:main}

In this section, we prove that when $X$ is \as\ and $Y$ is mixing and of finite type, the entropy and the periodic conditions guarantee the existence of a \bict\ factor code from $X$ to $Y$ with a bi-retract. With little extra work, in fact we prove further that every code from a proper subshift of $X$ to $Y$ can be extended to an open \bict\ code. First we need some lemmas.

\begin{lem}\label{lem:exist_bic}
    Let $X$ and $Y$ be \rSFTs\ with $h(X) > h(Y)$. Then there exist an \rSFT\ $Z \subset X$ and a \bic\ factor code $\pi : Z \to Y$.
\end{lem}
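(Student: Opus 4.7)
The plan is to realize $Z$ as a conjugate copy of $Y$ sitting inside $X$ as a sub-SFT; the inverse of such an embedding will then automatically be a bi-closing factor code, being a topological conjugacy.

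First, I apply Proposition~\ref{prop:aspe_then_SFT_exists} to the irreducible SFT $X$ (which is in particular an \ass) to extract a mixing sub-SFT $X_0 \subset X$ with $h(X_0) > h(Y)$. Exploiting the flexibility of this construction, I further arrange that $q_n(Y) \leq q_n(X_0)$ for every $n \geq 1$, where $q_n$ denotes the number of points of least period $n$. For large $n$ this is automatic since $q_n(X_0)$ grows at exponential rate $h(X_0) > h(Y)$ while $q_n(Y) \leq |\A(Y)|^n$; the finitely many small-$n$ inequalities can be enforced by choosing $X_0$ rich enough, e.g., by requiring it to contain a full shift on sufficiently many symbols found inside a suitable higher-block presentation of $X$, or by adjoining short periodic orbits of $X$ to the sub-SFT produced by Proposition~\ref{prop:aspe_then_SFT_exists}.

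Next, Krieger's embedding theorem asserts that a subshift $Y$ embeds in a mixing SFT $X_0$ whenever $h(Y) < h(X_0)$ and $q_n(Y) \leq q_n(X_0)$ for every $n \geq 1$. Both hypotheses hold by the previous step, so there is an embedding $\psi : Y \hookrightarrow X_0$. Setting $Z := \psi(Y) \subset X_0 \subset X$, one obtains an irreducible SFT (conjugate to $Y$) sitting inside $X$. The inverse $\pi := \psi^{-1} : Z \to Y$ is a topological conjugacy, and hence a bi-closing factor code: if $z \in Z$ and $y \in Y$ with $\pi(z)$ left- (respectively, right-) asymptotic to $y$, then the unique preimage of $y$ left- (right-) asymptotic to $z$ is $\bar z = \psi(y)$.

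The main obstacle is the verification of $q_n(Y) \leq q_n(X_0)$ for small $n$ in the first step. While entropy comparison disposes of all but finitely many $n$, the residual short-period inequalities are delicate because a sub-SFT of $X$ need not inherit the short-period orbits of $X$. The fix is to build $X_0$ so that its generating set carries both high-entropy mixing blocks (supplied by Proposition~\ref{prop:aspe_then_SFT_exists}) and a prescribed collection of short periodic orbits of $X$ sufficient to meet the finite list of small-$n$ bounds, while keeping $X_0$ mixing and $h(X_0) > h(Y)$.
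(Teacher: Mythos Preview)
Your approach has a genuine gap: the inequality $q_n(Y) \leq q_n(X_0)$ for small $n$ cannot be arranged in general, because the lemma assumes only $h(X) > h(Y)$ and \emph{no} periodic condition. For a concrete obstruction, take $X = \X_A$ with $A = \left( \begin{smallmatrix} 0 & 2 \\ 2 & 0 \end{smallmatrix} \right)$ (so $\per(X)=2$ and $X$ has no fixed points) and let $Y$ be a single fixed point. Then $h(X)=\log 2 > 0 = h(Y)$, yet no subshift of $X$ has a fixed point, so $q_1(Y)=1 > 0 = q_1(X_0)$ for every $X_0 \subset X$. Your proposed fixes---containing a full shift, or adjoining short periodic orbits of $X$---cannot manufacture periodic points that $X$ itself lacks. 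Hence $Y$ simply does not embed in $X$, and your construction of $Z$ as a conjugate copy of $Y$ is impossible. (There is a secondary issue: Proposition~\ref{prop:aspe_then_SFT_exists} yields a mixing $X_0$ only when $X$ is mixing, which is not assumed here.)

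The paper sidesteps this obstruction by modifying $Y$ rather than $X$: it builds a tower $Y_n$ over $Y$ (a degree-$n$ bi-resolving extension) with $\per(Y_n) = n \cdot \per(Y)$. For $n$ large and divisible by $\per(X)$, the shift $Y_n$ has no periodic points of small period at all, so the Krieger inequalities $q_j(Y_n) \leq q_j(X)$ reduce to an entropy estimate at periods $j \in n\setN$ and are easily verified. One then embeds $Y_n$ (not $Y$) into $X$, sets $Z$ to be the image, and composes the inverse conjugacy with the bi-resolving projection $Y_n \to Y$ to obtain the bi-closing factor code $\pi : Z \to Y$. The key idea you are missing is precisely this passage to a high-period extension of $Y$ to kill the short-period embedding obstructions.
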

\begin{proof}
    We can assume $Y = \X_B$ with $B$ irreducible. Let $m$ be the size of $B$. For each $n \in \setN$, define $Y_n = \X_{B_n}$, where $B_n$ is the $nm \times nm$ irreducible matrix given by
    $$B_n =
        \begin{pmatrix}
            0 & B & 0 & \cdots & 0 \\
            0 & 0 & B & \cdots & 0 \\
            \vdots & \vdots & \vdots & \ddots & \vdots \\
            0 & 0 & 0 & \cdots & B \\
            B & 0 & 0 & \cdots & 0
        \end{pmatrix}.$$
    Note that $\per(Y_n) = n \cdot \per(Y)$ and $p_{kn}(Y_n) = n \cdot p_{kn}(Y)$ for each $k \in \setN$.
    For each $n \in \setN$, it is easy to see that the natural projection code from $Y_n$ to $Y$ is \bir\ (e.g., see \cite{Nas}). So it suffices to show that $Y_n$ embeds into $X$ for large $n$.

    Let $\epsilon = ({h(X) - h(Y)})/3$. Take $n$ large such that $\per(X) | n $, $n^{-1} \log n < \epsilon, $
    $$\frac{1}{kn} \log p_{kn}(Y) < h(Y) + \epsilon \text{ for all } k \in \setN, \text{and}$$
    $$\frac{1}{kn} \log q_{kn}(X) > h(X) - \epsilon \text{ for all } k \in \setN.$$
    Then for each $k \in \setN$,
    $$ \frac{1}{kn} \log n + \frac{1}{kn} \log p_{kn}(Y) < h(Y) + 2 \epsilon = h(X) - \epsilon < \frac{1}{kn} \log q_{kn}(X)$$
    so we have $n \cdot p_{kn}(Y) < q_{kn}(X),$ and therefore
    $$q_{kn}(Y_n) \leq p_{kn}(Y_n) = n \cdot p_{kn}(Y) < q_{kn}(X).$$
    Since $p_j(Y_n) = 0$ if $j \notin n\setN$, we have $q_j(Y_n) \leq q_j(X)$ for all $j \in \setN$. Now applying the Krieger's Embedding Theorem \cite{Kri} gives the result.
\end{proof}

\begin{lem}\cite[Theorem 26.17]{DGS} \label{lem:DGS}
    Let $X$ be a mixing \SFT\ and $\widetilde X$ a proper subshift of $X$. For given $h < h(X)$, there is a mixing \SFT\ $Z \subset X$ such that $h(Z) > h$ and $Z \cap \widetilde X = \emptyset$.
\end{lem}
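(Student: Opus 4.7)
The plan is to exploit the fact that $\widetilde X$ is a proper subshift of $X$: pick any word $w \in \B_k(X) \setminus \B(\widetilde X)$, so that no point of $X$ in which $w$ occurs can lie in $\widetilde X$. Up to conjugacy I may assume $X$ is the edge shift of a primitive graph $G$; then $w$ represents a fixed path from some vertex $p$ to some vertex $q$, and any two blocks $wtw, wt'w \in \B(X)$ automatically concatenate along a shared $w$ to give $wtwt'w \in \B(X)$. It then suffices to construct a mixing \SFT\ $Z \subset X$ of entropy greater than $h$ in which $w$ occurs with uniformly bounded gaps.

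Let $N$ be a transition length for $X$. For $L \geq k$, define
\[
Z_L = \{\, x \in X : w \text{ occurs in } x_{[i,\,i+L-1]} \text{ for every } i \in \setZ \,\},
\]
which is an \SFT\ inside $X$ (obtained by forbidding the $L$-blocks of $X$ that miss $w$) and clearly satisfies $Z_L \cap \widetilde X = \emptyset$. For the entropy bound, use mixing: for each large $M$, the map sending $t \in \B_{M-k}(X)$ with $wtw \in \B(X)$ to its middle $(M-k-2N)$-subword is a surjection onto $\B_{M-k-2N}(X)$, so there are at least $|\B_{M-k-2N}(X)|$ such $t$. Concatenating arbitrary biinfinite sequences of these blocks $wt$ with shared $w$'s yields a subshift contained in $Z_L$ whenever $L \geq M$, of entropy at least $(M-k)^{-1}\log|\B_{M-k-2N}(X)|$, which tends to $h(X)$ as $M \to \infty$. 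Hence $h(Z_L) \to h(X)$, and $h(Z_L) > h$ for all large $L$.

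The main obstacle is verifying that $Z_L$ is mixing. Given $u, v \in \B(Z_L)$, I would first use mixing of $X$ together with the construction above to find short blocks $a, b$ (of length at most $L$) such that $ua, bv \in \B(Z_L)$, $ua$ ends in $w$, and $bv$ begins with $w$. To realize a given connecting length, I insert between them a block of the form $w t_1 w t_2 w \cdots w t_m w$ with each $|t_i| \in [N,\,L-2k]$ and $w t_i w \in \B(X)$; every $L$-window of this connector still contains $w$, and by the opening remark the whole concatenation lies in $\B(X)$, hence in $\B(Z_L)$. The achievable connecting lengths are $(m+1)k + \sum_{i=1}^m |t_i|$, and a Frobenius-type argument shows that as soon as $L - 2k \geq N + 1$ these cover every sufficiently large integer. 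Choosing $L$ large enough for both $h(Z_L) > h$ and this last condition, I set $Z = Z_L$, which has all the required properties.
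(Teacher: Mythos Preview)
The paper does not give its own proof of this lemma; it simply cites \cite[Theorem 26.17]{DGS}. So there is no in-paper argument to compare against, and the question is only whether your proof stands on its own.

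It does. Your construction---forbidding all $L$-blocks that miss a fixed word $w\in\B(X)\setminus\B(\widetilde X)$, then letting $L\to\infty$---is the standard route to this result and is essentially the argument one finds in \cite{DGS}. The three required properties are all handled: disjointness from $\widetilde X$ is immediate since every point of $Z_L$ contains $w$; the entropy estimate via counting blocks $t$ with $wtw\in\B(X)$ and concatenating is correct (your normalising factor should be $M^{-1}$ rather than $(M-k)^{-1}$, but the limit is unaffected); and the mixing argument via connectors of the form $t_1wt_2w\cdots wt_m$ with $|t_i|\in[N,L-2k]$ is sound. A couple of bookkeeping points are slightly loose---the containment of your concatenated subshift in $Z_L$ needs $L\ge M+k$ rather than $L\ge M$, and the uniformity in the mixing bound should be stated explicitly (the Frobenius threshold depends only on $N,k,L$, and $|a|,|b|\le L$, so a single $N_0$ works for all $u,v$)---but these are cosmetic and do not affect the validity of the argument.
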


\begin{lem}[Extension Lemma]\cite{Boy84}\label{lem:extension}
    Let $X$ be a shift space and $Y$ a mixing \SFT\ such that $P(X) \searrow P(Y)$. Then any code from a proper subshift of $X$ to $Y$ can be extended to a code from $X$ to $Y$.
\end{lem}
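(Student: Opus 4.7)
The plan is to follow Boyle's original construction from~\cite{Boy84}. After standard recoding, I may assume that $Y$ is a mixing $1$-step SFT (an edge shift) with transition length $N$, and $\phi_0 \colon X' \to Y$ is a $1$-block code given by a symbol map $\Phi_0 \colon \A(X') \to \A(Y)$. Because $X'$ is a \emph{proper} subshift of $X$, the languages differ strictly, so I may pick a marker word $w \in \B(X) \setminus \B(X')$, which by definition never occurs in any point of $X'$. I will then construct $\phi$ as a sliding block code of window $2k+1$ for some $k \gg |w|, N$, equivalently as a symbol map $\Phi \colon \B_{2k+1}(X) \to \A(Y)$ that extends $\Phi_0$.

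The definition of $\Phi$ proceeds case by case. If $u \in \B_{2k+1}(X')$, set $\Phi(u) := \Phi_0(u_k)$, the $\phi_0$-value of the centre symbol of $u$. Otherwise I apply a splicing rule: the positions of $w$-occurrences in $u$ carve the window into ``clean'' sub-blocks (those that admit an $X'$-lift, where $\Phi_0$'s prescription is safe to follow) and ``contaminated'' sub-blocks (those that do not). On contaminated sub-blocks, $\Phi$ routes the image into a fixed reference periodic trajectory $y^* \in Y$, and in a buffer zone of length at most $N$ on either side of each marker it interpolates between $\Phi_0$'s value and $y^*$'s value by a transition path in $Y$. The trajectory $y^*$ (or, if necessary, a small finite family indexed by residue classes in $Y$) is chosen using $P(X) \searrow P(Y)$: any periodic point of $X$ that is eventually entirely contaminated must have image in $Y$ of compatible period, and the hypothesis supplies exactly such orbits.

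Four properties need to be checked for the resulting $\phi$: sliding-block form, shift-equivariance, image inside $Y$, and agreement with $\phi_0$ on $X'$. The first two are automatic from the block presentation. The last is immediate because for $x \in X'$ every window $x_{[-k,k]}$ lies in $\B_{2k+1}(X')$, so $\Phi$ always applies the first clause and $\phi(x)_0 = \Phi_0(x_0) = \phi_0(x)_0$.

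The principal obstacle is verifying that $\phi(X) \subset Y$: as the window slides by one position, the $\Phi$-value may transition from a ``clean'' prescription to a ``contaminated'' one or vice versa, and each such transition must realise a legal edge of $Y$. This is where the transition length $N$ is essential: the splicing rule is designed so that within each length-$N$ buffer zone around a marker, $\Phi$ traces out a specific $Y$-path of length at most $N$ bridging $\Phi_0$'s value and the reference orbit's current state. Making this rule canonical and shift-equivariant is the core technical burden, and $k$ is chosen large enough that every buffer zone is entirely visible inside each window. Once this is set up, edge-compatibility across adjacent windows follows from the transition-length property of $Y$, combined with the divisibility provided by $P(X) \searrow P(Y)$ on the long contaminated runs where $\Phi$ traces $y^*$.
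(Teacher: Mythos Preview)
The paper gives no proof of this lemma; it is stated with a citation to~\cite{Boy84} and used as a black box, so there is no in-paper argument to compare against. Your outline is recognisably Boyle's strategy (a marker word $w \in \B(X) \setminus \B(X')$, a splicing rule, buffer zones of width $N$ exploiting the transition length of $Y$), and the verification that $\phi|_{X'} = \phi_0$ is correct.

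The substantive gap is your handling of periodic points, which is precisely where $P(X) \searrow P(Y)$ does real work. You propose to route contaminated blocks to ``a fixed reference periodic trajectory $y^* \in Y$'', but a mixing SFT need not have a fixed point; if $\per(y^*) > 1$, then an entirely-contaminated periodic point $x \in X$ with $\per(x)$ coprime to $\per(y^*)$ cannot be sent into the orbit of $y^*$. Your parenthetical (``a small finite family indexed by residue classes in $Y$'') does not close this, because a sliding-block code sees only a bounded window and has no access to $\per(x)$, so it cannot select among the family in a shift-equivariant way. The hypothesis $P(X) \searrow P(Y)$ tells you that compatible target orbits \emph{exist}, but existence is not the obstacle --- making a locally determined, shift-commuting choice is. Boyle's argument handles this with a preliminary reduction: the finitely many low-period orbits of $X$ outside $X'$ are dealt with individually (this is the only place $P(X) \searrow P(Y)$ is genuinely invoked), after which every remaining contaminated periodic point has period large enough that the mixing of $Y$ supplies the needed room for the splicing to close up. Your sketch omits this reduction, and as written the construction need not land in $Y$ on periodic inputs.
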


It is the following theorem which we extend in this section. By usual reduction to the mixing case, Theorem \ref{thm:BoyT} shows that the entropy and the periodic conditions guarantee the existence of a \rct\ factor code between two \rSFTs. However, to extend an arbitrary code on a proper subshift to a code on the whole domain, the mixing condition on $Y$ is crucial (see Example \ref{ex:extensionfail}).

\begin{thm} \cite{BoyT} \label{thm:BoyT}
    Let $X$ be an \rSFT\ and $Y$ a \mSFT\ such that $h(X) > h(Y)$ and $P(X) \searrow P(Y)$. Then any code from a proper subshift of $X$ to $Y$ can be extended to a \rct\ code on $X$.
\end{thm}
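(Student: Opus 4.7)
The plan is to build an auxiliary \bict\ code on a subshift of $X$, glue it with the given $\phi_0$, and then invoke the Extension Lemma. Starting from the given code $\phi_0 : W \to Y$ on a proper subshift $W \subsetneq X$, I would first produce an \rSFT\ $Z_0 \subset X$ disjoint from $W$ with $h(Y) < h(Z_0) < h(X)$ by applying Lemma~\ref{lem:DGS} after the standard passage to a mixing component of $X^{\per(X)}$. Lemma~\ref{lem:exist_bic} applied to $(Z_0, Y)$ then yields an \rSFT\ $Z \subset Z_0$ and a \bic\ factor code $\pi : Z \to Y$; since $\pi$ is \bic\ onto an \rSFT\ it is \fto\ of equal entropy, and the Perron--Frobenius remark following Definition~\ref{def:rct} promotes $\pi$ to \bict.

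Combine $\phi_0$ and $\pi$ to a single code $\phi_1 = \phi_0 \cup \pi$ on the subshift $W \cup Z$ (consistent because the supports are disjoint). Since a proper subshift of an \rSFT\ has strictly smaller entropy (by uniqueness and full support of the Parry measure), $h(W \cup Z) = \max(h(W), h(Z)) < h(X)$, so $W \cup Z$ is a proper subshift of $X$ and the Extension Lemma~\ref{lem:extension} yields an extension $\phi : X \to Y$ of $\phi_1$. For the extension to produce a \emph{right-continuing} code, I would structure it around a long marker word $m \in \B(Z)$: arrange $\phi$ so that between consecutive occurrences of $m$ in any $x \in X$, $\phi(x)$ agrees with $\pi$ applied to a chosen $Z$-excursion that interpolates these marker positions.

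The hard step is verifying right-continuity. Given $x \in X$ and $y \in Y$ with $\phi(x)_{(-\infty,0]} = y_{(-\infty,0]}$, I want $\bar x$ left-asymptotic to $x$ with $\phi(\bar x) = y$. When $x$ has $m$ infinitely often in its past, truncate $x$ at a distant occurrence of $m$ and invoke the right-continuing retract of $\pi$ on $Z$ (which exists by Proposition~\ref{prop:rct_from_SFT_has_retract_and_open_from_SFT_has_ull}(1)) to lift $y_{[0,\infty)}$ along the $Z$-excursion. When $x$ never sees $m$ in its past, the task is to use the irreducibility of $X$ to reroute $x_{(-\infty,-N]}$ via a bridge word that (i) returns into $Z$ in finitely many steps and (ii) has $\phi$-image matching the corresponding window of $y_{(-\infty,0]}$; the periodic condition $P(X) \searrow P(Y)$ is precisely what guarantees such a bridge exists in the correct period class.

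I expect the main obstacle to be ensuring the Extension Lemma's output has enough freedom on the bridges to make this image-matching possible for \emph{every} $y$ compatible with $\phi(x)_{(-\infty,0]}$. The naive extension need not be right-continuing; one has to bake the flexibility into the construction, via the marker scheme and by using the periodic condition to align period-classes at the bridge endpoints. Once that is arranged, right-continuity follows directly from the right-continuity of $\pi$ on the $Z$-portion of each point.
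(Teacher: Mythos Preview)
Your outline tracks the paper's strategy closely: find $Z\subset X$ disjoint from the given subshift carrying a \bic\ code $\pi$ onto $Y$, glue with $\phi_0$, extend via Lemma~\ref{lem:extension}, and then argue right continuing by routing tails into $Z$. The paper (via Theorem~\ref{thm:exist_bict_code_1} specialized to $X$ of finite type, as noted in the Remark following it) does exactly this. So the skeleton is right.

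The genuine gap is the one you flag yourself. Invoking the Extension Lemma gives \emph{some} extension $\phi$, but you then need, for every $y$ agreeing with $\phi(x)$ on $(-\infty,0]$, a bridge word from $x_{(-\infty,-n]}$ into $Z$ whose $\phi$-image matches the prescribed segment of $y$. Nothing in the Extension Lemma guarantees that; the lemma produces one extension, not a family with controllable images on bridges. Your appeal to $P(X)\searrow P(Y)$ for the bridge-matching is misplaced: the periodic condition is what makes Lemma~\ref{lem:extension} applicable, not what produces enough bridge words hitting every $Y$-block.

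The paper closes this gap by building $\phi$ explicitly rather than taking a black-box extension. It introduces a second mixing SFT $V\subset Z_1$ with $h(V)>h(Y)$, disjoint from $Z$, decomposes each $x$ into low-stretches (maximal long $Z$-segments) and high-stretches, and defines transition maps $\Psi_{HL}$, $\Psi_{LH}$ on the interfaces. The point of $V$ is a counting argument: because $h(V)>h(Y)$, the sets $\HL_{I+N}(a;w;b)$ of admissible bridge words (which pass through $V$) are larger than the corresponding $Y$-block sets, so $\Psi_{HL}$ can be chosen \emph{surjective} onto $\B^Y_{I+N}(\psi a,\psi b)$. That surjectivity is exactly the ``freedom on the bridges'' you need, and it is engineered into $\phi$ from the start rather than hoped for after the fact. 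Your single-marker scheme with an unspecified ``chosen $Z$-excursion'' does not supply this, and without it Case~2 of the continuing verification cannot be completed.
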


Now we are ready to prove the main theorem in this section.
As indicated in \S 1, we extend the above theorem in two directions. This extension result will be used in \S 5 to obtain our main results: Theorem \ref{thm:main} and Theorem \ref{thm:main2}.

\begin{thm} \label{thm:exist_bict_code_1}
    Let $X$ be an \ass\ and $Y$ a \mSFT\ such that $h(X) > h(Y)$ and $P(X) \searrow P(Y)$. Then any code from a proper subshift of $X$ to $Y$ can be extended to an open \bict\ code on $X$ with a bi-retract.
\end{thm}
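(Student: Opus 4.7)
The strategy extends Theorem \ref{thm:BoyT} on two axes simultaneously: from \rSFT\ domains to \ass\ domains, and from \rct\ conclusions to open \bict\ with bi-retract. The starting observation, using Lemma \ref{lem:open_ull_then_bict_and_converse} together with $Y$ being of finite type, is that ``open with uniform lifting length'' is equivalent to ``\bict\ with a bi-retract''; so it suffices to construct an extension $\tilde\phi\colon X\to Y$ of $\phi$ that is \bict\ with a bi-retract.

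First I would manufacture a well-placed SFT inside $X$. Proposition \ref{prop:aspe_then_SFT_exists} gives an \rSFT\ $W\subset X$ with $h(W)>h(Y)$ and all $k$-blocks of $W$ synchronizing in $X$; a small tweak (inserting a word of $\B(X)\setminus\B(\widetilde X)$ into the synchronizing word used in the construction) arranges $W\not\subset \widetilde X$. Applying Lemma \ref{lem:DGS} inside a mixing SFT piece (under $\sigma^{\per(W)}$) of the cyclic cover of $W$ coming from Theorem \ref{thm:cyclic_cover} and Lemma \ref{lem:aspe_then_cover_has_specification}, and taking the cyclic union, refines $W$ to an \rSFT\ $W_1\subset W$ of the same period with $h(W_1)>h(Y)$ and $W_1\cap \widetilde X=\emptyset$. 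Lemma \ref{lem:exist_bic} applied to $W_1$ and $Y$ then yields an \rSFT\ $Z\subset W_1$ with a \bic\ factor code $\pi\colon Z\to Y$, and by the Perron--Frobenius remark following Definition \ref{def:rct} together with Proposition \ref{prop:rct_from_SFT_has_retract_and_open_from_SFT_has_ull}, $\pi$ is in fact \bict\ with a bi-retract. Gluing produces a partial code $\psi\colon \widetilde X\cup Z\to Y$ defined by $\psi|_{\widetilde X}=\phi$ and $\psi|_Z=\pi$ on the proper subshift $\widetilde X\cup Z\subsetneq X$.

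The heart of the proof is to extend $\psi$ to a code $\tilde\phi\colon X\to Y$ preserving \bict\ with a bi-retract. The almost specification constant $N$ of $X$ combined with the fact that every $k$-block of $Z\subset W$ is synchronizing in $X$ supplies the splicing mechanism: any word of $\B(X)$ can be bridged, through a gap of length at most $N$, into a $k$-block of $\B(Z)$, and synchronization makes the bridged concatenation genuinely lie in $\B(X)$. Using this, construct $\tilde\phi$ on $X\setminus (\widetilde X\cup Z)$ by a bounded-spacing marker argument, so that outside the $\widetilde X$-region the code is computed as $\pi$ applied to a locally determined spliced point of $Z$. To verify the right continuing retract, given $x\in X$ and $y\in Y$ with $\tilde\phi(x)_{(-\infty,0]}=y_{(-\infty,0]}$, splice $x$ on the right (within the window $[0,N+k]$) onto a tail of a point $z\in Z$ whose $\pi$-image matches $y$ on the overlap, then invoke the right retract of $\pi$ to adjust the far-right of $\pi(z)$ to match $y$ globally; the resulting $\bar x\in X$ agrees with $x$ on $(-\infty,-m]$ for a uniform $m$ depending only on $N$, $k$, and the bi-retract of $\pi$, and satisfies $\tilde\phi(\bar x)=y$. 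The symmetric argument yields the left continuing retract, and Lemma \ref{lem:open_ull_then_bict_and_converse} then upgrades the result to openness with uniform lifting length. The main obstacle is the marker construction itself: one must simultaneously ensure that $\tilde\phi$ is continuous (hence a code), that it extends $\phi$ on $\widetilde X$, and that the bi-continuing behavior of $\pi$ on $Z$ propagates to a uniform bi-retract on all of $X$, despite the three distinct regions ($\widetilde X$, $Z$, and the splicing zone) that must be coordinated.
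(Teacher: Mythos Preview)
Your outline matches the paper's overall architecture: locate an \rSFT\ $Z\subset X$ disjoint from $\widetilde X$ carrying a \bic\ factor code $\pi\colon Z\to Y$, extend via a marker/stretch construction, and verify the bi-retract by splicing onto a $Z$-tail. Three concrete ingredients in the paper's proof are, however, absent from your sketch and each does real work.

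First, the paper does \emph{not} carry out the marker construction under almost specification. It reduces (Part~III) via the cyclic cover of $X$ and Lemma~\ref{lem:aspe_then_cover_has_specification} to the case where $X$ has the \spe; only then do bridge words have \emph{fixed} length~$N$, which the transition encoding (the sets $\HL_{I+N}(a;w;b)$ and the surjections $\Psi_{HL}^{a,b}$) relies on. Working directly with gaps of length $\le N$ forces variable-length transitions and breaks the clean counting. Second, before defining the final code the paper invokes the Extension Lemma (Lemma~\ref{lem:extension}) to obtain a preliminary $\psi\colon X\to Y$ extending both $\tilde\phi$ and $\pi$; this $\psi$ supplies the value of the final code on long high-stretches (rule~ii). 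Your partial code lives only on $\widetilde X\cup Z$, and ``$\pi$ applied to a locally spliced $Z$-point'' on the complement cannot be reconciled with agreeing with $\tilde\phi$ on $\widetilde X$, since a sliding block code sees only local windows, not subshift membership. Third, the paper introduces an auxiliary mixing SFT $V\subset Z_1$ with $V\cap Z=\emptyset$ and $h(V)>h(Y)$: the high-low and low-high transition blocks are required to pass through $V$, and it is precisely $h(V)>h(Y)$ that forces $|\HL_{I+N}(a;w;b)|\ge |\B_{I+N}^Y(\psi a,\psi b)|$, so that the surjections $\Psi_{HL}$, $\Psi_{LH}$ exist. This surjectivity is exactly what the retract verification in Part~II (Case~2) uses to produce a preimage $\bar x$ with the prescribed past and image~$y$. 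Without $V$ or an analogous entropy reservoir separated from $Z$, that step does not go through.
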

\begin{proof}
    Suppose that $\widetilde X$ is a proper subshift of $X$ and $\tilde \phi : \widetilde X \to Y$ is a code. We will construct a \bict\ code $\pXtoY$ with a bi-retract such that $\phi|_{\widetilde X} = \tilde \phi$.
    We divide the proof into three parts. In Part I, we construct an extension code in the case where $X$ has the \spe. In Part II, we prove that the code constructed in Part I is \bict\ with a bi-retract. In Part III, by using the results in Part I and II we prove the case where $X$ is \as.

    \vspace{0.15cm}
    {\large \textsf{Part} I. \quad} In this part, we prove the case where $X$ has the \spe.
    By Proposition \ref{prop:aspe_then_SFT_exists}, there exists a \mSFT\ $Z_0$ with $h(Z_0) > h(Y)$ in which all sufficiently long blocks are synchronizing for $X$.
    By Lemma \ref{lem:DGS}, we can find a \mSFT\ $Z_1 \subset Z_0$ disjoint from $\widetilde X$ with $h(Z_1) > h(Y)$. Also by Lemma \ref{lem:exist_bic}, there exist an \rSFT\ $Z \subset Z_1$ and a biclosing factor code $\pi : Z \to Y$.
    By Lemma \ref{lem:extension}, we can find a factor code $\psi : X \to Y$ such that $\psi|_{Z} = \pi$ and $\psi|_{\widetilde X} = \tilde \phi$. Finally find a \mSFT\ $V \subset Z_1$ disjoint from $Z$ with $h(V) > h(Y)$ by using Lemma \ref{lem:DGS}.
    \begin{figure}[h]
        \includegraphics[width=6.0cm]{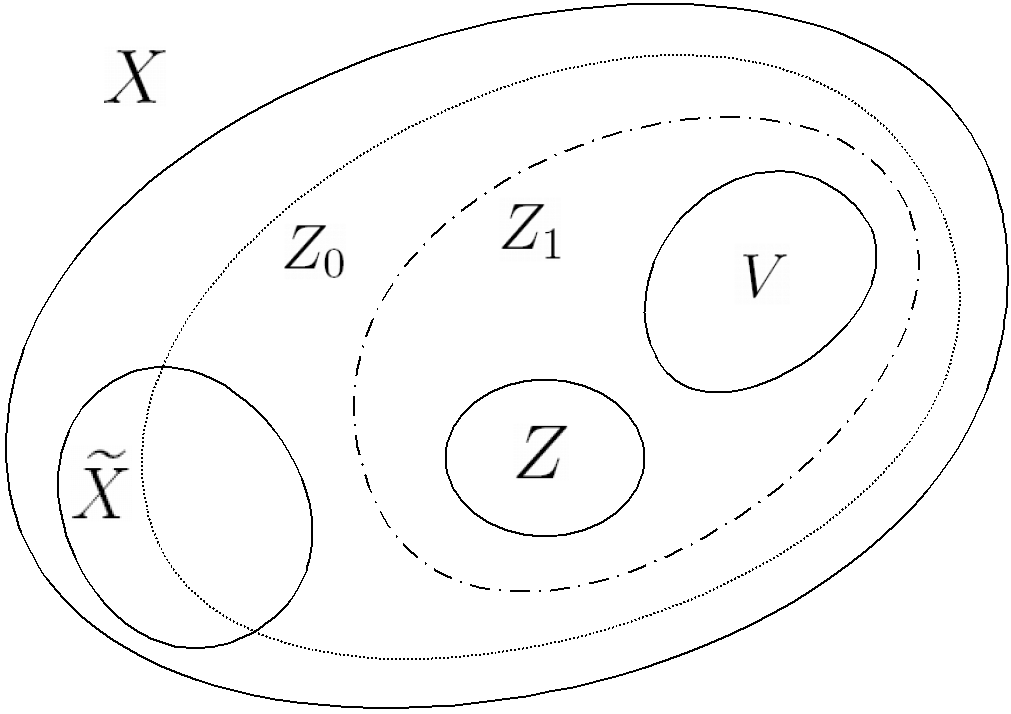}
    \end{figure}

    By passing to higher block shifts, we can assume that
    \begin{enumerate}
        \item[(a)] $Z_0, Z, V$ and $Y$ are edge shifts,
        \item[(b)] $\A(Z) \cap \A(\widetilde X) = \emptyset$ and $\A(Z) \cap \A(V) = \emptyset$,
        \item[(c)] $\psi$ is a 1-block code,
        \item[(d)] if $a, b \in \A(Z)$ and $ab \in \B(X)$, then $ab \in \B(Z)$,
        \item[(e)] each $a \in \A(Z_0)$ is a synchronizing word for $X$.
    \end{enumerate}
    Let $D$ be the right and \lc\ delay of $\pi$ \cite{LM}. Take $N$ large such that $N$ is a transition length for $X$, $Y$ and a weak transition length for $Z$.
    Fix a synchronizing word $\alpha \in \A(V)$. Since $X$ has the \spe, by Lemma \ref{lem:spe_aspe_iff_connectable} for each $x \in X$ there exist $w, \bar w \in \B_N(X)$ such that $x^- w \alpha \in X^-$ and $\alpha \bar w x^+ \in X^+$.

    Fix $i \gg 3N$. For each $a \in \A(X), b \in \A(Z)$ and $w \in \B_N(X)$, define
    \[ \begin{split}
            \HL_{i}(a;w;b) = \{ u \in \B_i^X(a,b) : & u_{[1,N+2]} = a w \alpha, u_{[N+2,i-2N)} \in \B(V), \\
            & u_{i-N} \notin \A(Z), \text{ and } u_{(i-N,i]} \in \B(Z) \};
       \end{split}
    \]
    \[ \begin{split}
            \LH_{i}(b;w;a) = \{ u \in \B_i^X&(b,a) : u_{[1,N]} \in \B(Z), u_{N+1} \notin \A(Z), \\
            & u_{(2N+1,i-N-1]} \in \B(V), \text{ and } u_{[i-N-1,i]} = \alpha w a \}.
       \end{split}
    \]
    (Note that there is no explicit restriction on subinterval $[i-2N,i-N)$ in the definition of $\HL_{i}(a;w;b)$. It guarantees that for a fixed $a \in \A(X)$ and $w \in \B_N(X)$ with $aw\alpha \in \B(X)$, there exists $b \in \A(Z)$ with $\HL_{i}(a;w;b) \neq \emptyset$. Similarly for $\LH_{i}(b;w;a)$.)
    Since $h(V) > h(Y)$, there is $I \in \setN$ such that
    \[  \begin{split}
            &|\HL_{I+N}(a;w;b)| \geq |\B_{I+N}^Y(\psi a,\psi b)| \text{ and } \\
            &|\LH_{I+N}(b;w;a)| \geq |\B_{I+N}^Y(\psi b,\psi a)|
        \end{split}
    \]
    for all $a$, $b$ and $w \in \B(X)$ whenever these sets are nonempty. (This is possible since if $\HL_{I+N}(a;w;b) \neq \emptyset$, then the asymptotic cardinality of this set is greater than  $ce^{(I+N)h(V)}$ for some $c > 0$ by an application of Perron-Frobenius Theory. Similarly for $\LH_{I+N}(b;w;a)$.)
    For each $a \in \B(X)$ and $b \in \A(Z)$, define surjections $\Psi_{HL}^{a,b}$ from $\B_{I+N}^X(a,b)$ onto $\B_{I+N}^Y(\psi a,\psi b)$ such that the restriction $\Psi_{HL}^{a,b} |_{\HL_{I+N}(a;w;b)}$ is surjective for each $w \in \B_N(X)$ with ${\HL_{I+N}(a;w;b)} \neq \emptyset$. Similarly define surjections $\Psi_{LH}^{b,a}$.

    Finally for each $2N \leq j \leq 2N + 2I$, define a map $\Phi_j : \A(X)^2 \to \B_j(Y)$ such that $\Phi_j(c,d) \in \B_j^Y(\psi c, \psi d)$. This is possible since $N$ is a transition length for $Y$. These maps will be used as marker fillers.

    For given $x \in X$, we divide $x \in X$ into low and high-stretches as in \cite{BoyT}. Call a segment of $x$ a \emph{low-stretch} if it is a $Z$-word of length $> 2N + D$, and not preceded or followed by a symbol from $\A(Z)$, i.e., a maximal $Z$-word of length $ > 2N + D$. Remaining stretches of maximal length are called \emph{high-stretches} (of $x$). By the condition (d), low-stretches of $x$ cannot overlap and hence $x$ is uniquely decomposed as low and high-stretches. Also, if a high-stretch of $x$ is of length greater than $2I$, then it is called a \emph{long} high-stretch. Otherwise, call it a \emph{short} high-stretch. The figure below shows a typical decomposition of $x$.

    \begin{figure}[h]
        \includegraphics[width=12cm]{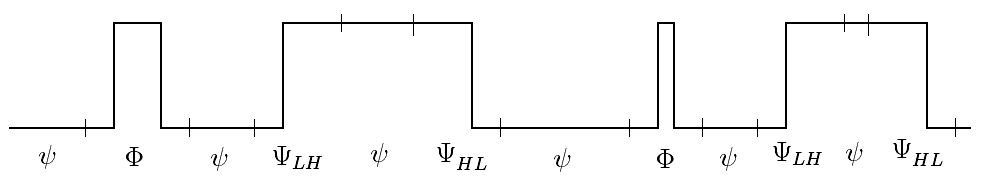}
    \end{figure}

    Now we define a code $\pXtoY$. Let $x \in X$.
    \begin{enumerate}
        \item[i)] \textit{low-stretches.} If $x_{[i-N,i+N]}$ is in a low-stretch, then define $\phi(x)_i = \psi(x_i)$.
        \item[ii)] \textit{long high-stretches.} If $x_{[i-I,i+I]}$ is in a long high-stretch, let $\phi(x)_i = \psi(x_i)$.
        \item[iii)] \textit{short high-stretches.} If $x_{[i,j]}$ is a short high-stretch, then $j-i+1 \leq 2I$. Define $\phi(x)_{[i-N,j+N]} = \Phi_{2N + j-i+1}(x_{i-N},x_{i+N})$.
        \item[iv)] \textit{high-low transition.} If $x_{[i,i+I)}$ is the end of some long high-stretch and $x_{[i+I,i+N+I)}$ is the beginning of some low-stretch, then define
            $$\phi(x)_{[i,i+N+I)} = \Psi_{HL}^{x_i,x_{i+N+I-1}} (x_{[i,i+N+I)}).$$
        \item[v)] \textit{low-high transition.} Similarly as in (iv), using $\Psi_{LH}$.
    \end{enumerate}

    The figure above describes the action of $\phi$ on parts of a point corresponding to some of these cases.
    Note that these cases cover all parts of $x$ and $\phi$ is a well-defined code from $X$ to $Y$. Indeed $\phi$ has memory and anticipation $2N+2I+D$. Since $x \in Z$ consists of a single low-stretch and $x \in \widetilde X$ consists of a single high-stretch, we have $\phi|_Z = \pi$ and $\phi|_{\widetilde X} = \tilde \phi$ and hence $\phi$ is a factor code which is an extension of $\tilde \phi$.

    \vspace{0.20cm}
    {\large \textsf{Part} II. \quad} We show that $\phi$ is \bict\ with bi-retract $n = 2I + 6N + 3D$. Suppose $x \in X$, $y \in Y$ satisfy $\phi(x)_{(-\infty,0]} = y_{(-\infty,0]}$.

    \vspace{0.1cm}
    \textsf{Case} 1. Suppose there exists an $i \in [-n,-2N-D]$ such that $x_{[i,i+2N+D]}$ is part of a low-stretch. Then, since $\pi$ is a \rc\ factor code, there exists a one-sided sequence $z_{[i+N,\infty)}$ in $Z^+$ such that $z_{i+N} = x_{i+N}$ and $\pi(z_{[i+N,\infty)}) = y_{[i+N,\infty)}$. Define a point ${\bar x}$ by
    $ {\bar x}_k = x_k$ if $k \leq i+N$, and $ {\bar x}_k = z_k$ if  $k \geq i+N$.
    Note that $\bar x \in X$ since $x_{i+N}$ is synchronizing. Also note that $\bar x_{[i,\infty)}$ is a part of low-stretch of $\bar x$ and therefore rule i) applies for $k \geq i+N$ and we have $\phi(\bar x) = y$.

    \textsf{Case} 2. If Case 1 does not holds, then $x_{[-n+2N+D,-2N-D]}$ is part of a high-stretch (note that there may be a part of a low-stretch at the left end of the interval). This interval is of length $2I + 2N + D$ and thus it is a part of a long high-stretch of $x$. Since there is no $Z$-word of length greater than $2N+D$ in this part, there exists $a \in \A(X) \setminus \A(Z)$ and $ -4N-2D-I \leq i \leq -2N-D-I$ with $x_i = a$ (by (d) in the recoding step).
    Since $X$ has the \spe, by Lemma \ref{lem:spe_aspe_iff_connectable} there exists $w \in \B_N(X)$ such that $x_{(-\infty,i]} w \alpha \in X^-$. Take $b \in \A(Z)$ with $\psi(b)=y_{i+I+N-1}$ and $\HL_{I+N}(a;w;b) \neq \emptyset$.

    \begin{claim}
        Such $b$ exists.
    \end{claim}

    \begin{proof}
        Let $p = \per(Z)$. Then there exists a partition $\{A_0, A_1, \cdots, A_{p-1} \}$ whose union is $\A(Z)$ with the property that whenever $ab \in \B(Z)$ and $a \in A_k$, we have $b \in A_{k+1 (\mathrm{mod}~ p)}$. Since $Y$ is mixing and $\pi : Z \to Y$ is a factor code, for each $k$ $\pi|_{A_k} : A_k \to \A(Y)$ is onto.
        Take any block $v$ of length $I-N-1$ with $v_{[1,N+2]} = a w \alpha$ and $v_{[N+2,I-N)} \in \B(V)$.
        There exist $d \in \A(Z)$ and $c \in \A(Z_0) \setminus \A(Z)$ such that $cd \in \B(X)$. If we take $\bar k$ with $d \in A_{\bar k}$, then there exists $b \in A_{\bar k}$ such that $\pi(b) = y_{i+I+N-1}$. Since $N$ is a weak transition length for $Z$, there exists a block $u \in \B_N(Z)$ with $u_1 = d$ and $u_N = b$. As $Z_0$ is an edge shift containing $Z$, we have $cu \in \B(X)$.
        Since $N$ is a transition length for $X$, we can find a block $w \in \B_N(X)$ with $vwcu \in \B(X)$. Then $vwcu \in \HL_{I+N}(a;w;b)$, which completes the proof.
    \end{proof}

    Since $\Psi_{HL}^{a,b}|_{\HL_{I+N}(a;w;b)}$ is onto, there exists $u_{[i,i+I+N)} \in \HL_{I+N}(a;w;b)$ with $\Psi_{HL}^{a,b}(u)$ $= y_{[i,i+I+N)}$. Since $u_{i+I+N-1} \in \A(Z)$ and $\pi$ is \rc, there exists $z_{[i+I+N-1,\infty)}$ in $Z^+$ such that $z_{i+I+N-1} = u_{i+I+N-1}$ and $\pi(z_{[i+I+N-1,\infty)}) = y_{[i+I+N-1,\infty)}$. Let
    \[
        {\bar x}_k =
            \begin{cases}
                x_k     & \text{if $k \leq i$}   \\
                u_k     & \text{if $i \leq k \leq i+I+N-1 $} \\
                z_k     & \text{if $k \geq i+I+N-1 $}   \\
            \end{cases}
    \]
    Then $\bar x \in X$ since $\alpha$ and $u_{i+I+N-1}$ are synchronizing. Note that $\bar x_{[-n+2N+D,i+I)}$ is a part of long high-stretch and $\bar x_{[i+I,\infty)}$ is a low-stretch (our chosen block $\bar x_i = x_i = a \notin \A(Z)$ guarantees no occurrence of a $Z$-block of length greater than $2N+D$ in $\bar x_{[-n+2N+D,i+I)}$). Therefore rules ii), iv) and i) apply to $\bar x_{[-n+2N+D+I,\infty)}$ and we have $\phi(\bar x) = y$. So $\phi$ is \rct\ with retract $n$. Similarly $\phi$ is \lct\ with retract $n$, which completes the proof when $X$ has the \spe.

    \vspace{0.20cm}
    {\large \textsf{Part} III. \quad} We prove the general case where $X$ is merely \as. Let $\{D_0, D_1, \cdots, D_{p-1} \}$ be the cyclic cover of $X$. Define \[ Z = \bigcup_{i \neq j} D_i \cap D_j. \]
    Then $Z$ is a proper subshift of $X$. By using Lemma \ref{lem:extension}, we may assume that $Z$ is contained in $\widetilde X$.
    Note that $\sigma^p|_{D_0 \cap \widetilde X}$ is (conjugate to) a shift space. Also $\sigma^p|_{D_0}$ has the \spe\ by Lemma \ref{lem:aspe_then_cover_has_specification}. The code $\tilde \phi$ naturally induces a code $\tilde \psi : (D_0 \cap \widetilde X, \sigma^p) \to (Y,\sigma^p)$ by restriction. By applying Parts I and II, we have a $\sigma^p$-commuting \bict\ code $\psi : (D_0, \sigma^p) \to (Y,\sigma^p)$ with a bi-retract such that $\psi|_{D_0 \cap \widetilde X} = \tilde \psi$.
    Define a code $\phi : X \to Y$ as follows: Given $x \in X$, there exists $0 \leq i < p$ with $x \in D_i$. Then we let $\phi(x) = \sigma^{i}\psi(\sigma^{-i}(x))$. If $x \in D_j$ for some $j \neq i$,
    then $x \in Z \subset \widetilde X$ and therefore
    \[ \begin{split}
            \sigma^{j}\psi(\sigma^{-j}(x)) & = \sigma^{j}\tilde \psi(\sigma^{-j}(x)) = \sigma^{j}\tilde \phi(\sigma^{-j}(x)) = \tilde \phi(x) \\
            &= \sigma^{i}\tilde \phi(\sigma^{-i}(x)) =
            \sigma^{i}\tilde \psi(\sigma^{-i}(x)) = \sigma^{i}\psi(\sigma^{-i}(x)).
       \end{split}
    \]
    Thus $\phi$ is well defined. These equalities also show that $\phi|_{\widetilde X} = \tilde \phi$. Note that $\phi$ is continuous. It is easy to see that $\phi$ is indeed $\sigma$-commuting (hence a code) and \bict\ with a bi-retract. By Lemma \ref{lem:open_ull_then_bict_and_converse}, $\phi$ is also an open code with a uniform lifting length.
\end{proof}

\begin{rem}
    Suppose $X$ is assumed to be of finite type in Theorem \ref{thm:exist_bict_code_1}. Then we can take $Z_0 = X$ in Part I. Also Case 2 in Part II can be proved as easily as in Case 1 (since any word is synchronizing) and by using a usual reduction to the mixing case, Part III can be removed. Thus in this case the proof gives an alternative proof of \bict\ version of Theorem \ref{thm:BoyT} in which the definition of $\phi$ is much simpler. This is because by using the extension lemma, we need not consider the marker sets as in \cite{BoyT} and thus $\phi$ on low-high and high-low transitions can be defined more easily.
\end{rem}

\vspace{0.3cm}
\section{Lower entropy factor theorems}\label{sec:main2}

For a synchronized system $X$, fix a synchronized word $w \in \B(X)$. Let $C_n(X)$ be the set of words $v \in \B_n(X)$ such that $wvw \in \B(X)$. Then a synchronized entropy $\hsyn(X)$ is defined by
$$\hsyn(X) = \limsup_{n \to \infty} \frac{1}{n} \log |C_n(X)|.$$
This value is independent of $w$ and $h(X) \geq \hsyn(X)$. In general $h(X) \neq \hsyn(X)$. If $X$ is \as, then by Proposition \ref{prop:aspe_then_SFT_exists} and Theorem 3.2 of \cite{Tho} we have $\hsyn(X) = h(X)$.
By using the cyclic cover of a synchronized system, Thomsen has given a generalization of the lower entropy factor theorem in \cite{Boy84} for the case where $Y$ is merely an irreducible \SFT.

\begin{thm} \cite{Tho} \label{thm:Thomsen_main}
    Let $X$ be a synchronized system and $Y$ an \rSFT\ such that $\hsyn(X) > h(Y)$. Then $Y$ is a factor of $X$ \ifff\ the following hold:
    \begin{enumerate}
        \item[i)] $P(X) \searrow P(Y)$.
        \item[ii)] If $q=\per(Y)$ and $\{D_0, D_1, \cdots, D_{p-1}\}$ is the cyclic cover of $X$, then $q|p$ and
            $$ \biggl( \bigcup_{j=0}^{\frac{p}{q} - 1} D_{i+jq} \biggr) \cap \biggl( \bigcup_{j=0}^{\frac{p}{q} - 1} D_{k+jq} \biggr) = \emptyset$$
            when $i,k \in \{ 0, 1, \ldots, q-1\},\: i \neq k$.
    \end{enumerate}
\end{thm}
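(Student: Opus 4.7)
The necessity of (i) is standard: $\sigma$-equivariance sends period-$n$ points of $X$ to points of $Y$ with period dividing $n$. For (ii), each $D_i$ is mixing under $\sigma^{pq} = (\sigma^p)^q$, so its image $\phi(D_i)$ is a $\sigma^{pq}$-mixing subshift of $Y$, which must lie inside a single mixing component of $(Y, \sigma^{pq})$, i.e., inside a single $E_{f(i)}$. The commutation $\phi\sigma = \sigma\phi$ together with $\sigma(D_i) = D_{i+1 \modp}$ and $\sigma(E_k) = E_{k+1 \pmod{q}}$ forces $f(i+1) \equiv f(i) + 1 \pmod{q}$, and cyclic consistency $f(p) \equiv f(0) \pmod{q}$ gives $q \mid p$. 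Since the $E_k$'s are disjoint (as $Y$ is an \rSFT), the preimages $\phi^{-1}(E_k) = \bigcup_{j=0}^{p/q-1} D_{k'+jq}$ must also be disjoint as $k'$ ranges through residues mod $q$; this is (ii).

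For sufficiency, condition (ii) furnishes a decomposition $X = F_0 \sqcup \cdots \sqcup F_{q-1}$ into pairwise disjoint closed $\sigma^q$-invariant pieces $F_k = \bigcup_{j=0}^{p/q-1} D_{k+jq}$, permuted cyclically by $\sigma$. Compactness makes each $F_k$ clopen, so the construction of a $\sigma$-equivariant factor code $\phi : X \to Y$ reduces to the construction of a $\sigma^q$-equivariant factor code $\phi_0 : (F_0, \sigma^q) \to (E_0, \sigma^q)$, after which one sets $\phi(\sigma^k x) := \sigma^k \phi_0(x)$ for $x \in F_0$, $0 \leq k < q$.

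Now $(E_0, \sigma^q)$ is a mixing \rSFT, while $(F_0, \sigma^q)$ is a synchronized system whose cyclic cover $\{D_0, D_q, \dots, D_{(p/q-1)q}\}$ has period $p/q$ and mixing piece $(D_0, \sigma^p)$. The power rule for entropy, together with $\hsyn(X) > h(Y)$, yields $\hsyn(F_0, \sigma^q) > h(E_0, \sigma^q)$, and (i) descends to the analogous periodic condition. To build $\phi_0$, I would apply a synchronized analog of Proposition \ref{prop:aspe_then_SFT_exists} (essentially Theorem 3.2 of \cite{Tho}, invoked earlier for $\hsyn = h$ on almost specified shifts) to extract a mixing \SFT\ $Z \subset D_0$ of entropy strictly exceeding the required bound, in which all long blocks are synchronizing for $D_0$. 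Boyle's classical factor theorem between \mSFTs\ then furnishes a factor code on $Z$ onto an appropriate iterate of $(E_0, \sigma^q)$, and a marker/extension argument in the spirit of Lemma \ref{lem:extension} extends this first across $D_0$ and then to $(F_0, \sigma^q)$ via its cyclic cover structure.

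The main obstacle is the careful bookkeeping of periods and entropies through these iterated power-shift reductions --- specifically, verifying that (i) and the strict entropy inequality translate cleanly to the hypotheses of Boyle's theorem on the mixing subshift $Z$, and carrying out the marker extension for a synchronized (rather than almost specified) base, where Theorem \ref{thm:exist_bict_code_1} does not directly apply. Constructing the explicit marker fillers in this looser synchronized setting is the core technical content.
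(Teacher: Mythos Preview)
The paper does not prove this theorem; it is quoted verbatim from Thomsen \cite{Tho} and used as a black box in the proofs of Theorems~\ref{thm:main} and~\ref{thm:main2}. There is therefore no in-paper proof to compare your proposal against.

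That said, your sketch lines up well with how the paper \emph{uses} the result. The necessity argument for (ii) is essentially the observation the paper makes informally after Theorem~\ref{thm:main2} (that $\sigma^{pq}$ acts transitively on each $D_i$, forcing $\phi(D_i)$ into a single $E_{f(i)}$). For sufficiency, your reduction---pass to the clopen pieces $F_k = \bigcup_j D_{k+jq}$, build a $\sigma^q$-equivariant factor code $(F_0,\sigma^q)\to(E_0,\sigma^q)$ onto a mixing SFT, then propagate by $\sigma$---is exactly the reduction the paper carries out in the proof of Theorem~\ref{thm:main}, except that there the paper can invoke Theorem~\ref{thm:exist_bict_code_1} because $X$ is almost specified. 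You correctly flag that in the merely synchronized setting this step is unavailable and one must fall back on Thomsen's original construction; that is precisely the content the paper outsources to \cite{Tho}. One small caution: your claim that the $F_k$ are clopen uses that they are pairwise disjoint, which is the hypothesis (ii) itself and not automatic for synchronized $X$ (the $D_i$ may overlap on sets of empty interior, cf.\ Theorem~\ref{thm:cyclic_cover}(vi)); be sure your write-up makes clear that (ii) is what buys you the partition.
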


Note that when $X$ is of finite type, then the condition (ii) is trivial.
Using this result, we have our main theorem.

\begin{thm}\label{thm:main}
    Let $X$ be an \ass\ and $Y$ an \rSFT\ such that $h(X) > h(Y)$. Then the following are equivalent.
    \begin{enumerate}
        \item $Y$ is a factor of $X$ by a \bict\ code with a bi-retract.
        \item $Y$ is a factor of $X$ by an open code with a uniform lifting length.
        \item $Y$ is a factor of $X$.
    \end{enumerate}

    If $X$ is of finite type, then the above conditions are also equivalent to:
    \begin{enumerate}
        \item[(4)] $P(X) \searrow P(Y)$.
    \end{enumerate}
\end{thm}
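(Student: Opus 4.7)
The plan is to establish (1)$\Leftrightarrow$(2) and the trivial (1)$\Rightarrow$(3) directly, to prove (3)$\Rightarrow$(1) by a cyclic-cover reduction to the mixing case of Theorem \ref{thm:exist_bict_code_1}, and to derive (3)$\Leftrightarrow$(4) in the \SFT\ case from Boyle's classical factor theorem \cite{Boy84}.

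Since $Y$ is an \SFT, Lemma \ref{lem:open_ull_then_bict_and_converse} yields (1)$\Leftrightarrow$(2) at once, and (1)$\Rightarrow$(3) is obvious. For the crucial step (3)$\Rightarrow$(1), suppose $\phi\colon X\to Y$ is a factor code. Let $\{D_0,\ldots,D_{p-1}\}$ be the cyclic cover of $X$ with $p=\per(X)$, and $\{Y_0,\ldots,Y_{q-1}\}$ the cyclic decomposition of $Y$ with $q=\per(Y)$. By Theorem \ref{thm:Thomsen_main}, $q\mid p$, and $D_i\cap D_j\neq\emptyset$ forces $i\equiv j\pmod q$; by $\sigma$-equivariance of $\phi$, $\phi(D_i)\subset Y_{(i_0+i)\bmod q}$ for some $i_0$. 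Set $\widetilde X_0:=\bigcup_{j\neq 0}(D_0\cap D_{jq})$. Since each $D_0\cap D_{jq}$ (for $jq\not\equiv 0\pmod p$) has empty interior in $D_0$ by Theorem \ref{thm:cyclic_cover}, $\widetilde X_0$ is a proper closed $\sigma^p$-invariant subset of $D_0$. I would now apply Theorem \ref{thm:exist_bict_code_1} to the systems $(D_0,\sigma^p)$ and $(Y_{i_0},\sigma^p)$ together with the code $\phi|_{\widetilde X_0}\colon\widetilde X_0\to Y_{i_0}$: the domain has the \spe\ by Lemma \ref{lem:aspe_then_cover_has_specification}; the codomain is a \mSFT\ because $q\mid p$; the entropy gap $h(D_0,\sigma^p)=p\,h(X)>p\,h(Y)=h(Y_{i_0},\sigma^p)$ persists; and the periodic condition $P(D_0,\sigma^p)\searrow P(Y_{i_0},\sigma^p)$ is inherited from $\phi$. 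This produces an open \bict\ code $\psi\colon D_0\to Y_{i_0}$ with bi-retract that extends $\phi|_{\widetilde X_0}$.

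Next define $\phi'\colon X\to Y$ by $\phi'(x):=\sigma^i\psi(\sigma^{-i}(x))$ for $x\in D_i$, and follow the piecing argument in Part III of the proof of Theorem \ref{thm:exist_bict_code_1} to verify that $\phi'$ is a well-defined, continuous, $\sigma$-equivariant factor code that is \bict\ with bi-retract. Single-valuedness on an overlap $D_i\cap D_j$ (with $i\equiv j\pmod q$) is ensured by the fact that $\psi$ agrees with $\phi$ on $\widetilde X_0$: both candidate values $\sigma^i\psi(\sigma^{-i}(x))$ and $\sigma^j\psi(\sigma^{-j}(x))$ collapse to $\phi(x)$ via $\sigma$-equivariance of $\phi$. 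For the \SFT\ case, (3)$\Rightarrow$(4) is immediate, while (4)$\Rightarrow$(3) is Boyle's lower entropy factor theorem \cite{Boy84}.

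The main obstacle is engineering the well-definedness of $\phi'$ across the overlaps of the cyclic components of $X$; the device of forcing $\psi$ to extend the restriction $\phi|_{\widetilde X_0}$, which is made possible by the extension clause of Theorem \ref{thm:exist_bict_code_1}, is precisely what renders the piecing automatic and allows the argument to proceed without any finite-type hypothesis on $X$.
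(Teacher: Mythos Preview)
Your proposal is correct, and the overall plan matches the paper's, but the execution of (3)$\Rightarrow$(1) differs in one structural choice. The paper groups the cyclic pieces into $C=\bigcup_{j=0}^{p/q-1}D_{jq}$ and uses condition (ii) of Theorem~\ref{thm:Thomsen_main} to see that $C,\sigma(C),\ldots,\sigma^{q-1}(C)$ is an honest \emph{partition} of $X$; it then checks that $(C,\sigma^q)$ is almost specified and applies Theorem~\ref{thm:exist_bict_code_1} once (no extension clause needed) to produce $\psi:(C,\sigma^q)\to(E_0,\sigma^q)$, after which the piecing over $q$ disjoint translates involves no overlap bookkeeping at all. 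You instead work at level $p$ with $(D_0,\sigma^p)$, piece over $p$ translates that genuinely overlap, and invoke the extension clause of Theorem~\ref{thm:exist_bict_code_1} (extending the given $\phi$ on $\widetilde X_0=\bigcup_{j\neq 0}(D_0\cap D_{jq})$) so that the gluing is consistent---this is precisely the mechanism of Part~III in the proof of Theorem~\ref{thm:exist_bict_code_1} itself. The paper's route is a bit slicker because passing to level $q$ eliminates overlaps entirely; your route is equally valid and lies closer in spirit to Theorem~\ref{thm:main2}, where one must in any case control the code on a prescribed proper subshift. One small point: Theorem~\ref{thm:cyclic_cover} gives $D_i\cap D_j$ empty interior in $X$, not in $D_0$, so the properness of $\widetilde X_0$ in $D_0$ needs the extra (easy) observation that $D_0$ itself has nonempty interior in $X$, or else a Baire-category argument.
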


\begin{proof}
    (1) $\To$ (2) follows from Lemma \ref{lem:open_ull_then_bict_and_converse}. (2) $\To$ (3) is clear. Now suppose (3). Then we have two conditions in Theorem \ref{thm:Thomsen_main}. Let $\{E_0, E_1, \cdots, E_{q-1}\}$ be the cyclic cover of $Y$.
    Let $C = \bigcup_{j=0}^{\frac{p}{q} - 1} D_{jq}$. Note that $\{C, \sigma (C), \cdots, \sigma^{q-1}(C)\}$ is a partition of $X$ and $( C, \sigma^q)$ is (conjugate to) a shift space. As in the proof of Lemma \ref{lem:aspe_then_cover_has_specification}, it is easy to see that $\sigma^q|_C$ is an \ass.
    Since $q=\per(Y)$, $\sigma^q|_{E_0}$ is a \mSFT.
    By Theorem \ref{thm:exist_bict_code_1}, there exists a ($\sigma^q$-commuting) code $\psi : (C,\sigma^q) \to (E_0,\sigma^q)$ which is \bict\ with a bi-retract.
    As usual, we can define a factor code $\pXtoY$ as follows: For $x \in X$, there exists a unique $0 \leq i < q$ with $x \in \sigma^i(C)$. Define $\phi(x) = \sigma^i \psi(\sigma^{-i}(x))$. It is easy to check that $\phi$ is a ($\sigma$-commuting) code and \bict\ with a bi-retract. Thus (1) holds.

    Finally, when $X$ is of finite type,
    then by using a usual reduction to mixing case (cf. \cite{LM}), the problem can be reduced to Proposition \ref{thm:exist_bict_code_1}. Thus (4) implies (1).
\end{proof}

\begin{rem}
    The equivalence of (1), (2) and (3) in Theorem \ref{thm:main} fails under the hypothesis of Theorem \ref{thm:Thomsen_main}, that is, when $X$ is a synchronized system and $Y$ is an \rSFT\ with $\hsyn(X) > h(Y)$.

    For example, let $X = \X_W$ and $Y = \X_{W_1}$, where $W = \{ ab^k c^k : k \geq 0\}$ and $W_1 = \{ a, abc \}$. Note that $X$ is a synchronized system and $Y$ is an \rSFT. Since $Y$ is a proper subsystem of a mixing sofic shift $\X_{W_2}$, where $W_2 = \{ ab^k c^k : k \leq 2 \}$, it follows that $h(Y) < h(\X_{W_2}) \leq \hsyn(X)$. Since $Y$ has the unique fixed point $a^\infty$, we have $P(X) \searrow P(Y)$ and $Y$ is a factor of $X$.

    We claim that there exists no \rct\ code from $X$ to $Y$. Indeed, if $\phi : X \to Y$ is a code, then let $x = b^\infty$ and $y = a^\infty. \cdots $ where $y_{[0,\infty)}$ is right transitive. Then $\phi(x)$ and $y$ are left asymptotic. But when $z \neq b^\infty$ is left asymptotic to $x$, then $z$ is in the orbit of $b^\infty.c^\infty$, so $\phi(z)$ must be of the form $a^\infty \cdots a^\infty$, since $a^\infty$ is the only fixed point in $Y$. Thus $\phi(z) \neq y$ and $\phi$ cannot be \rct. By Lemma \ref{lem:open_ull_then_bict_and_converse}, $\phi$ cannot be open with a uniform lifting length.

    As we have shown in Example \ref{ex:examples_section2} (1), there is an open code from $X$ to $Y$. At this time, it is an open question whether there exists an open factor code from a synchronized system $X$ to an \rSFT\ $Y$ if there is a factor code from $X$ to $Y$.
\end{rem}

From Lemma \ref{prop:factor_of_SFT_is_SFT} and Theorem \ref{thm:main}, the following result is immediate.

\begin{cor} \label{thm:exist_open_code}
    Let $X$ be an \rSFT\ and $Y$ a shift space. Then $Y$ is a lower entropy open factor of $X$ \ifff\ $h(X) > h(Y)$, $P(X) \searrow P(Y)$ and $Y$ is an \rSFT.
\end{cor}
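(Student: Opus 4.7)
The plan is to prove the two directions separately, with both directions being essentially immediate consequences of the results already established.

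For the forward direction, suppose $Y$ is a lower entropy open factor of $X$, meaning there exists an open factor code $\phi : X \to Y$ with $h(X) > h(Y)$. The entropy inequality is given. For the periodic condition, it is a general fact that any factor code maps a periodic point of period $n$ to a periodic point whose period divides $n$, so $P(X) \searrow P(Y)$ holds automatically. Since $X$ is irreducible and $\phi$ is surjective and continuous, $Y$ must also be irreducible (take $u, v \in \B(Y)$, lift to $\tilde u, \tilde v \in \B(X)$ by continuity and surjectivity, find a connecting word $\tilde w$ in $X$, and push it down). Finally, the fact that $Y$ is of finite type follows directly from Lemma \ref{prop:factor_of_SFT_is_SFT}, which says that any open factor image of a \SFT\ is a \SFT. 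Combining these three observations with irreducibility gives that $Y$ is an \rSFT.

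For the reverse direction, assume $h(X) > h(Y)$, $P(X) \searrow P(Y)$, and $Y$ is an \rSFT. Since every irreducible \SFT\ is (in particular) almost specified, $X$ is an \ass\ of finite type. This places us precisely in the setting of the last clause of Theorem \ref{thm:main}, so the implication (4) $\To$ (2) there yields a factor code $\pXtoY$ that is open (with a uniform lifting length). Hence $Y$ is a lower entropy open factor of $X$.

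There is no real obstacle here, since the serious content is already packaged into Theorem \ref{thm:main} and Lemma \ref{prop:factor_of_SFT_is_SFT}; the corollary simply recombines them to characterize lower entropy open factors of \rSFTs\ purely in terms of entropy, the periodic condition, and the structure of $Y$. The only point requiring any care is checking that irreducibility of $Y$ comes for free in the forward direction, which follows from the surjectivity of $\phi$.
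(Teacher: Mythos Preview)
Your proof is correct and follows exactly the approach the paper indicates: the paper states the corollary is immediate from Lemma~\ref{prop:factor_of_SFT_is_SFT} and Theorem~\ref{thm:main}, and you have simply unpacked those references, together with the routine observations that factor codes preserve the periodic condition and irreducibility.
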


Let $X$ and $Y$ be synchronized systems with their cyclic covers  $\{D_0, D_1, \cdots, D_{p-1}\}$ and $\{E_0, E_1, \cdots, E_{q-1}\}$, respectively. If $\phi : X \to Y$ is a code, then there exists an integer $ 0 \leq j < q$ with $\phi(D_0) \subset E_j$
since $\sigma^{pq}$ acts transitively on $D_i$.
So for a code $\tilde \phi : \widetilde X \to Y$ from a proper subshift $\widetilde X$ of $X$ to be extended to a code on $X$, a necessary condition is that there exists an integer $ 0 \leq j < q$ with
$$\tilde \phi(\widetilde X \cap D_0) \subset E_j.$$
When this holds, we say that $\tilde \phi$ satisfies the \emph{cyclic condition for $X$}. Note that if $Y$ is mixing, then any code from a proper subshift of $X$ to $Y$ satisfies the cyclic condition for $X$.

The following extension theorem says that if the conditions in Theorem \ref{thm:main} hold, then the above cyclic condition is the only obstruction to extend a code defined on a proper subshift of $X$ to a code from $X$ to $Y$. It is this obstruction which is responsible for the failure of Theorem \ref{thm:BoyT} and \ref{thm:exist_bict_code_1} when the target is not mixing.

\begin{thm}\label{thm:main2}
    Let $X$ be an \ass\ and $Y$ an \rSFT\ such that $h(X) > h(Y)$ and $Y$ is a factor of $X$. For a code $\tilde \phi : \widetilde X \to Y$ defined on a proper subshift $\widetilde X$ of $X$, \tFAE.
    \begin{enumerate}
        \item $\tilde \phi$ satisfies the cyclic condition for $X$.
        \item There is a code $\psi : X \to Y$ with $\psi|_{\widetilde X} = \tilde \phi$.
        \item There is a factor code $\pXtoY$ with $\phi|_{\widetilde X} = \tilde \phi$ which is open with a uniform lifting length and which is \bict\ with a bi-retract.
    \end{enumerate}
\end{thm}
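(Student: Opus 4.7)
I will establish (3) $\Rightarrow$ (2), (2) $\Rightarrow$ (1), and (1) $\Rightarrow$ (3) in turn. The first is immediate. For (2) $\Rightarrow$ (1), suppose $\psi : X \to Y$ is a code extending $\tilde\phi$. Since $\sigma^p|_{D_0}$ is topologically mixing (hence transitive) and since $q \mid p$ makes each $E_j$ a $\sigma^p$-invariant clopen piece partitioning $Y$, the image $\psi(D_0)$ must lie in a single $E_j$. Restricting to $\widetilde X \cap D_0$ then gives the cyclic condition.

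For (1) $\Rightarrow$ (3), my plan is to reduce to the mixing case handled by Theorem~\ref{thm:exist_bict_code_1}, following the cyclic-cover strategy used in Theorem~\ref{thm:main}. Since $Y$ is a factor of $X$, Theorem~\ref{thm:Thomsen_main} supplies $q \mid p$ together with the disjointness clause (ii). Put $C_k := \bigcup_{j=0}^{p/q - 1} D_{k + jq}$ for $0 \leq k < q$; these form a $\sigma$-cyclic partition of $X$ and each is $\sigma^q$-invariant. By a straightforward adaptation of the argument in Lemma~\ref{lem:aspe_then_cover_has_specification}, $(C_k, \sigma^q)$ is (conjugate to) an almost specified shift, while $(E_k, \sigma^q)$ is a mixing SFT with $h(E_k, \sigma^q) = q\,h(Y) < q\,h(X) = h(C_k, \sigma^q)$; the periodic condition $P(C_k, \sigma^q) \searrow P(E_k, \sigma^q)$ is inherited from $P(X) \searrow P(Y)$ (which in turn is forced by the existence of a factor code $X \to Y$) and the divisibility $q \mid p$.

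After cyclically relabeling the $E_j$'s so that $\tilde\phi(\widetilde X \cap C_k) \subset E_k$ for every $k$ (possible by the cyclic condition together with $\sigma$-commutativity of $\tilde\phi$), I pick an index $i_0$ with $\widetilde X \cap C_{i_0} \subsetneq C_{i_0}$; such $i_0$ exists since $\widetilde X$ is a proper subshift of $X$ and the $C_k$ partition $X$. Theorem~\ref{thm:exist_bict_code_1} applied to $\tilde\phi|_{\widetilde X \cap C_{i_0}} : (\widetilde X \cap C_{i_0}, \sigma^q) \to (E_{i_0}, \sigma^q)$ yields an open \bict\ extension $\psi : (C_{i_0}, \sigma^q) \to (E_{i_0}, \sigma^q)$ with bi-retract and uniform lifting length. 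I then define $\phi : X \to Y$ by $\phi(x) = \sigma^{k - i_0}\psi(\sigma^{i_0 - k}(x))$ whenever $x \in C_k$. A short check using $\sigma$-commutativity of $\tilde\phi$ and $\sigma$-invariance of $\widetilde X$ shows that for any $x \in \widetilde X \cap C_k$, $\sigma^{i_0 - k}(x) \in \widetilde X \cap C_{i_0}$, so $\psi$ and $\tilde\phi$ agree there, and therefore $\phi(x) = \tilde\phi(x)$. Hence $\phi$ is a well-defined $\sigma$-commuting code agreeing with $\tilde\phi$ on all of $\widetilde X$, not merely on $\widetilde X \cap C_{i_0}$.

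The main technical step is transferring the \bict\ property with bi-retract and the openness with uniform lifting length of $\psi$ --- formulated in the $\sigma^q$-dynamics on $C_{i_0}$ --- to $\phi$ in the $\sigma$-dynamics on $X$. This rests on two observations: two-sided sequences are left asymptotic in $(X, \sigma)$ if and only if they are left asymptotic under $\sigma^q$; and left asymptotic points of $X$ necessarily lie in the same $C_k$, which is guaranteed by the disjointness clause of Theorem~\ref{thm:Thomsen_main}(ii) together with the fact that the cyclic coset of a sufficiently long synchronizing word is determined. Granting these, a bi-retract $n$ and uniform lifting length for $\psi$ translate to a bi-retract of order $nq$ plus a constant and a uniform lifting length for $\phi$, and Lemma~\ref{lem:open_ull_then_bict_and_converse} ties openness with uniform lifting length to the \bict\ property with a bi-retract, giving (3).
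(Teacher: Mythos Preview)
Your proof is correct and follows essentially the same route as the paper: reduce via the cyclic cover and Theorem~\ref{thm:Thomsen_main} to the clopen $\sigma^q$-invariant pieces $C_k$, apply Theorem~\ref{thm:exist_bict_code_1} on one piece to extend $\tilde\phi$ there, and propagate by conjugating with powers of $\sigma$. Your extra care in choosing an index $i_0$ with $\widetilde X \cap C_{i_0} \subsetneq C_{i_0}$ is harmless but unnecessary, since $\sigma$-invariance of $\widetilde X$ together with $\sigma(C_k)=C_{k+1}$ forces $\widetilde X \cap C_k \subsetneq C_k$ for \emph{every} $k$ once $\widetilde X \subsetneq X$.
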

\begin{proof}
    It is clear that $(3) \To (2) \To (1)$. So assume (1) and let $\{E_0, E_1, \cdots, E_{q-1}\}$ be the cyclic cover of $Y$. Since $Y$ is a factor of $X$, we have the two conditions in Theorem \ref{thm:Thomsen_main}. Let $C = \bigcup_{j=0}^{\frac{p}{q} - 1} D_{jq}$. Then $\sigma^q|_C$ is an \ass. Let $\tilde \psi$ be the restriction of $\tilde \phi$ to $C$.
    Since $\tilde \phi$ satisfies the cyclic condition for $X$, there exists an element $E_j$ in the cyclic cover of $Y$ with $\tilde \phi(D_0 \cap \widetilde X) \subset E_j$. Then we have $\tilde \phi(C \cap \widetilde X) \subset E_j$. Note that $\sigma^q|_{E_j}$ is a mixing \SFT.

    By Theorem \ref{thm:exist_bict_code_1}, we have a ($\sigma^q$-commuting) code $\psi : (C,\sigma^q) \to (E_j,\sigma^q)$ which is an extension of $\tilde \psi$ and \bict\ with a bi-retract. Remainder of the proof goes as in that in Theorem \ref{thm:main}.
\end{proof}

We remark that in \cite{BarD}, Barth and Dykstra considered related conditions in the case of reducible \SFTs\ using \emph{phase matrices}.

\vspace{0.1cm}
Our last example shows that even when both shift spaces are of finite type, the cyclic condition is very crucial to extend a code defined on a proper subshift of the domain. In contrast to the existence theorems, extension theorems do not directly follow from the mixing case. 

\begin{exam} \label{ex:extensionfail}
    Let $X = \X_A$ with $A = \left( \begin{smallmatrix} 0 & 2 \\ 2 & 0 \end{smallmatrix} \right)$ and $Y = \{(ab)^\infty, (ba)^\infty \}$. The edges for $X$ are given as in the following figure. Then $P(X) \searrow P(Y)$ and $Y$ is a factor of $X$.
    Let $\widetilde X = \{ (13)^\infty, (31)^\infty, (24)^\infty, (42)^\infty \}$. We define $\tilde \phi : \widetilde X \to Y$ by
    \[ \begin{split}
            \tilde \phi ( (13)^\infty) = (ab)^\infty, \qquad & \tilde \phi ( (31)^\infty) = (ba)^\infty, \\
\tilde \phi ( (24)^\infty) = (ba)^\infty, \qquad & \tilde \phi ( (42)^\infty) = (ab)^\infty.            \end{split}
    \]

    \vspace{-0.5cm}
    \begin{Figure}[h]
        \label{fig:extensionfail}
        \includegraphics[height=2.5cm]{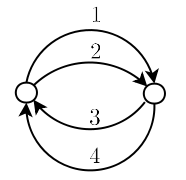}
    \end{Figure}
    \vspace{-0.3cm}

    Suppose $\pXtoY$ is a code with $\phi|_{\widetilde X} = \tilde \phi$. Write $X = D_0 \dot \cup D_1$ where $D_0 = \{ x \in X : x_0 = 1 \text{ or } 2 \}$ and $D_1 = \sigma (D_0)$. Then $(13)^\infty, (24)^\infty \in D_0$. Since $\sigma^2|_{D_0}$ is irreducible, $\sigma^2|_{\phi(D_0)}$ is also irreducible, hence $\phi(D_0)$ must be a single point. This is a contradiction. Thus $\tilde \phi$ cannot be extended to a code on $X$.
\end{exam}

\textit{Acknowledgment.} This paper was written as part of the author's doctorial thesis, under the guidance of Prof. Sujin Shin. I would like to thank for all her suggestions and good advice. Thanks also to Jisang Yoo for valuable comments. Also I am grateful to Kim Bojeong Basic Science Foundation for generous help. Partial support was provided by the second stage of the Brain Korea 21 Project, The Development Project of Human Resources in Mathematics, KAIST in 2008.

\bibliographystyle{amsplain}

\end{document}